\pgfplotsset{compat=1.7}
\newenvironment{proof}{\noindent {\em {Proof}}.}{$\square$
\medskip}
\newtheorem{Theorem}{Theorem}[section]
\newtheorem{Corollary}{Corollary}[section]
\newtheorem{Lemma}{Lemma}[section]
\newcommand{\rvline}{\hspace*{-\arraycolsep}\vline\hspace*{-\arraycolsep}}
\begin{document}

\setlength{\unitlength}{10mm}
\title{\bf On the tree-number of the power graph associated with a finite groups}

\author{\ {\sc  S. Rahbariyan }
\\[0.3cm]
{\em Department of Mathematics, Faculty of Sciences,}\\[0.1cm]
 {\em Arak University}\\[0.1cm]
 {\em P. O. Box $ 38156-8-8349$, Arak, Iran}\\[0.3cm]
{\em E-mail}: {\tt    s-rahbariyan@araku.ac.ir  }\\[0.2cm]}
\maketitle

\date{}
\maketitle
\begin{abstract}

Given a group $G$, we define the power graph $\mathcal{P}(G)$ as follows: the vertices are the elements of $G$
and two vertices $x$ and $y$ are joined by an edge if $\langle x \rangle \subseteq \langle y \rangle$ or 
$\langle y \rangle \subseteq \langle x \rangle$. Obviously the power
graph of any group is always connected, because the identity element of the group is adjacent
to all other vertices. We consider $\kappa(G)$, the number of
spanning trees of the power graph associated with a finite group $G$.
In this paper, for a finite group $G$, first we represent some properties of $\mathcal{P}(G)$,  
 then we are going to find some divisors of $\kappa(G)$, and finally we prove that 
 the simple group $A_6\cong L_2(9)$  is uniquely determined by
tree-number of its power graph among all finite simple groups.
			\end{abstract}

	\section{Introduction}
Throughout this paper, all groups are finite and  all the graphs under
consideration are finite, simple (with no loops or multiple edges) and undirected.
 In this paper, we consider a well-known graph association with  a finite group named as power graph. For a group $G$,  the power graph  $\mathcal{P}(G)$, is the graph that its vertices are all elements of the group $G$ and two different vertices 
 $x$ and $y$ are joined by an edge if $\langle x \rangle \subseteq \langle y \rangle$ or 
$\langle y \rangle \subseteq \langle x \rangle$. We
denote by $\mathcal{P}^*(G)$  the graph obtained by deleting the vertex $1$ from $\mathcal{P}(G)$.
The term power graph was introduced in \cite{KQ}, and after that power graphs
have been investigated by many authors, see for instance \cite{AK, C, MRS}. The
investigation of power graphs associated with algebraic structures is important, because these graphs have valuable applications (see the survey article
\cite{KR}) and are related to automata theory (see the book \cite{K}).

A spanning
tree of a connected graph is a subgraph that contains all the vertices and
is a tree. Counting the number of spanning trees in a connected graph is
a problem of long-standing interest in various field of science. For a graph
$\Gamma$, the number of spanning trees of $\Gamma$; denoted by $\kappa(\Gamma)$; is known as the
complexity of $\Gamma$.
 By the  definition
of the power graph of any group, the identity element of the group
is adjacent to all other vertices, so the graph is always connected. We denote by $\kappa(G)$, the number of spanning trees of the power
graph $\mathcal{P}(G)$ of a group $G$. 
A well-known result due to Cayley \cite{Ca} says that the complexity of
the complete graph on $n$ vertices is $n^{n-2}$. In \cite{CH} it was shown that a finite group has a complete power graph if and only if it is a cyclic $p$-group, where $p$
is a prime number. Thus, as an immediate consequence of Cayley's result, we
derive $\kappa(\mathbb{Z}_p^m)=(p^m)^{p^m-2}$. In some investigations, the formula to compute the complexity
$\kappa(G)$, for instance,  where $G$ is the cyclic group $\mathbb{Z}_n$, dihedral group $D_{2n}$, the generalized quaternion group $Q_{4n}$ (see  \cite{MR}), the simple groups $L_2(q)$, the extra-special $p$-groups of order $p^3$ and
the Frobenius groups (see \cite {KI}) have been obtained. In this paper, we take a step forward and find some  general results due to complexity
$\kappa(G)$, for a finite group $G$, and finally, as an application of these results, we represent the following investigation.

For two isomorphic groups $G$ and $H$, clearly, $\kappa(G)=\kappa(H)$. However, generally the converse is not hold. 
 For
instance, for all finite elementary abelian $2$-groups $G$,  we have $\kappa(G)=1$.

A group $G$ from a class $\mathcal{C}$ is said to be recognizable in $\mathcal{C}$ by $\kappa(G)$ (shortly, $\kappa$-recognizable in $\mathcal{C}$)  if every group $H\in \mathcal{C}$ with $\kappa(H)=\kappa(G)$ is isomorphic to $G$. In other words, $G$ is $\kappa$-recognizable in
$\mathcal{C}$ if $h_{\mathcal{C}}(G) = 1$, where $h_{\mathcal{C}}(G)$ is the (possibly infinite) number of pairwise non-isomorphic groups   $H\in \mathcal{C}$ with $\kappa(H)=\kappa(G)$.
 We denote by $\mathcal{F}$ and $\mathcal{S}$ the classes of all finite groups and all finite
simple groups, respectively. In \cite{MR}, the first example of $\kappa$-recognizable group in class $\mathcal{S}$ was found.

\begin{Theorem}\cite{MR} \label{AA5}The alternating group $A_5$  is $\kappa$-recognizable in the class of all finite simple groups,
that is, $h_{\mathcal{S}}(A_5) = 1$.
\end{Theorem}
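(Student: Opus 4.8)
The plan is to first determine the exact value of $\kappa(A_5)$ and then show that no other finite simple group attains it. Since the identity of any group $G$ is adjacent to every other vertex, $\mathcal{P}(G)$ is the cone (join) $K_1 \vee \mathcal{P}^*(G)$. Writing the Laplacian spectrum of $\mathcal{P}^*(G)$, a graph on $n-1 = |G|-1$ vertices, as $0 = \nu_1 \le \nu_2 \le \cdots \le \nu_{n-1}$, the standard join formula yields the Laplacian spectrum of $\mathcal{P}(G)$ as $\{0,\, n\}\cup\{\nu_j + 1 : 2 \le j \le n-1\}$, so by the Matrix--Tree theorem
\[
\kappa(G) = \frac{1}{n}\cdot n \cdot \prod_{j=2}^{n-1}(1+\nu_j) = \prod_{j=2}^{n-1}(1+\nu_j).
\]
Every nonidentity element of $A_5$ has prime order, so $\mathcal{P}^*(A_5)$ is a disjoint union of cliques: $15$ copies of $K_1$ (the involutions), $10$ copies of $K_2$ (the order-$3$ subgroups), and $6$ copies of $K_4$ (the order-$5$ subgroups). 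Since $K_s$ has Laplacian eigenvalue $s$ with multiplicity $s-1$ and $0$ once, the $\nu$-spectrum is $\{0^{(31)},\,2^{(10)},\,4^{(18)}\}$, whence $\kappa(A_5) = 3^{10}\cdot 5^{18}$, a number that is odd and divisible only by $3$ and $5$.

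Next I would prove the divisor tool that drives the recognition. The $\varphi(m)$ generators of a maximal cyclic subgroup $C$ of order $m$ are pairwise closed twins in $\mathcal{P}^*(G)$, having the common closed neighbourhood $C\setminus\{1\}$ and common degree $m-2$. For closed twins of degree $d$ the vectors $e_u - e_v$ are Laplacian eigenvectors with eigenvalue $d+1$; here $d+1 = m-1$ occurs with multiplicity $\varphi(m)-1$, and the eigenvectors attached to distinct maximal cyclic subgroups have disjoint supports, hence are independent. Consequently
\[
\prod_{C \text{ maximal cyclic}} |C|^{\,\varphi(|C|)-1} \ \Big|\ \kappa(G),
\]
so in particular every maximal cyclic subgroup of order $m \ge 3$ forces $m \mid \kappa(G)$. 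For $A_5$ this bound is sharp and reproduces the computation above.

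Now take a finite simple group $H$ with $\kappa(H) = 3^{10}5^{18}$, and let $\pi(H)$ denote the set of prime divisors of $|H|$. If a prime $p \ge 7$ divided $|H|$, Cauchy's theorem would give an element of order $p$, lying in a maximal cyclic subgroup of order divisible by $p$, forcing $p \mid \kappa(H)$, which is impossible. Thus $\pi(H) \subseteq \{2,3,5\}$, so $H$ is a simple $K_3$-group, and by the classification of such groups $H \in \{A_5,\ A_6 \cong L_2(9),\ U_4(2)\cong S_4(3)\}$. Both $A_6$ and $U_4(2)$ contain an element of order $4$ whose cyclic subgroup is maximal cyclic (neither group has elements of order $8, 12, 20, \dots$), so $4 = 4^{\varphi(4)-1}$ divides their tree-numbers, making $\kappa(A_6)$ and $\kappa(U_4(2))$ even; since $\kappa(A_5)$ is odd, these cases are excluded. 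Hence $H \cong A_5$, that is, $h_{\mathcal{S}}(A_5) = 1$.

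The main obstacle is the divisor lemma of the second paragraph: one must verify carefully that the generator sets really are closed-twin modules of the stated degree and that the resulting eigenvalues survive the single deletion of a zero eigenvalue in the Matrix--Tree product, so that the divisibility is genuine and not merely a one-prime-at-a-time lower bound. A second delicate point is the appeal to the (CFSG-dependent) list of simple $\{2,3,5\}$-groups; once that list is granted, the parity argument eliminating $A_6$ and $U_4(2)$ is immediate.
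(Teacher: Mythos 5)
Your strategy is sound and, as far as this paper is concerned, genuinely different: the paper never proves this theorem at all (it is quoted from \cite{MR}), and the machinery it builds for the companion result on $A_6$ (Lemma \ref{B2}, Theorem \ref{mu3}) works with block determinants of $\mathbf{J}+\mathbf{Q}$, whereas you argue spectrally via the join formula and closed twins. Your computation $\kappa(A_5)=3^{10}\cdot 5^{18}$ is correct, and your divisor lemma $\prod_{C}|C|^{\varphi(|C|)-1}\mid\kappa(G)$ (product over maximal cyclic subgroups $C$) is both correct and completable: the generators of a maximal cyclic $C$ are indeed closed twins of degree $|C|-2$ in $\mathcal{P}^*(G)$, the eigenvectors $e_u-e_v$ attached to distinct subgroups have disjoint supports, and the integrality worry you flag is resolved by noting $\kappa(G)=\det\bigl(\mathbf{I}+\mathbf{Q}(\mathcal{P}^*(G))\bigr)$, so that $(x-m)^{\mu}$ divides the monic integral characteristic polynomial of this integer matrix and the complementary factor is again monic integral, hence evaluates to an integer at $x=0$. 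Your lemma is in fact a cleaner variant of the paper's Theorem \ref{mu3}: it divides $\kappa(G)$ itself rather than $\det(\mathbf{J}+\mathbf{Q})$, and it applies to every maximal cyclic subgroup, not only to orders in $\mu(G)$.

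Two points need repair, one of them genuine. The minor one: from $\pi(H)\subseteq\{2,3,5\}$ you pass directly to ``$H$ is a simple $K_3$-group''; you must first dismiss the abelian simple groups $\mathbb{Z}_2$, $\mathbb{Z}_3$, $\mathbb{Z}_5$, whose tree-numbers are $1$, $3$, $5^3$ (the paper does exactly this at the start of its proof of Theorem \ref{A6}). The genuine one: your elimination of $U_4(2)$ rests on the claim that it has no elements of order $12$, and that is false. The spectrum of $U_4(2)\cong S_4(3)$ is $\{1,2,3,4,5,6,9,12\}$ (classes $12A$, $12B$ in \cite{atlas}), so $\mu(U_4(2))=\{5,9,12\}$; consequently your assertion that some order-$4$ cyclic subgroup of $U_4(2)$ is maximal cyclic is unjustified as written --- if every order-$4$ element were a power of an order-$12$ element it would be false, and you have not examined the power maps. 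The step fails as stated, but your own lemma repairs it immediately: since $12\in\mu(U_4(2))$, any cyclic subgroup of order $12$ is maximal cyclic, whence $12^{\varphi(12)-1}=12^{3}$ divides $\kappa(U_4(2))$, which is therefore even and cannot equal the odd number $3^{10}5^{18}$. (The $A_6$ elimination is fine, since $\pi_e(A_6)=\{1,2,3,4,5\}$; alternatively Theorem \ref{mnn} gives $\kappa(A_6)=2^{180}\cdot 3^{40}\cdot 5^{108}\neq 3^{10}\cdot 5^{18}$ at once.) With these two corrections your proof is complete.
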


 After that, in \cite{KI}, the $\kappa$-recognizable group in class $\mathcal {S}$ for simple group $L_2(7)$, also, has been proven.
However, by the results as to complexity $\kappa(G)$, which are found in this paper, we are going to offer a different and short  proof for the following theorem.  

\begin{Theorem} \label{A6} For the simple group $A_6\cong L_2(9)$, we have $h_{\mathcal {S}} (A_6\cong L_2(9)) = 1$,
in the class $\mathcal{S}$ of all
finite simple groups.
\end{Theorem}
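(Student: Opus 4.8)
The plan is to compute $\kappa(A_6)$ explicitly and then to show, using the divisor results of the preceding sections together with the classification of finite simple groups, that no other simple group reproduces this value. For the computation I would first analyse the maximal cyclic subgroups of $A_6$. Every involution of $A_6$ is a double transposition and equals the square of an element of order $4$ (unique up to inversion), so each $\mathbb{Z}_2$ lies in a unique $\mathbb{Z}_4$; the subgroups $\mathbb{Z}_3$ and $\mathbb{Z}_5$ are self-centralizing and meet each other trivially. Hence the maximal cyclic subgroups of $A_6$ have orders $3,4,5$ only, any two of them intersect in the identity, and a direct count yields $40$ copies of $\mathbb{Z}_3$, $45$ of $\mathbb{Z}_4$ and $36$ of $\mathbb{Z}_5$. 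Thus $\mathcal{P}^*(A_6)=40K_2\sqcup 45K_3\sqcup 36K_4$ is a disjoint union of cliques and $\mathcal{P}(A_6)$ is the cone over it (the identity being adjacent to everything). Feeding this into the complexity formula of the earlier sections — equivalently, combining the Laplacian join-spectrum with Cayley's formula \cite{Ca} that $\kappa(K_d)=d^{d-2}$ — each maximal cyclic subgroup $C$ contributes a factor $|C|^{|C|-2}$, giving
\begin{equation*}
\kappa(A_6)=3^{40}\cdot 4^{90}\cdot 5^{108}=2^{180}\cdot 3^{40}\cdot 5^{108}.
\end{equation*}

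Next let $H\in\mathcal{S}$ satisfy $\kappa(H)=\kappa(A_6)$. Since $\kappa(A_6)$ has three distinct prime divisors, $H$ cannot be cyclic of prime order, for which $\kappa(\mathbb{Z}_p)=p^{p-2}$ is a prime power by \cite{CH} and Cayley's theorem. I would then appeal to the divisor results of the earlier sections to pin down $\pi(H)$: they force every odd prime $p$ dividing $|H|$ to divide $\kappa(H)$, while the prime $2$ can enter $\kappa(H)$ only through cyclic subgroups of even order greater than $2$. Matching this with $\pi(\kappa(A_6))=\{2,3,5\}$ gives $\pi(H)\subseteq\{2,3,5\}$. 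By Burnside's $p^{a}q^{b}$-theorem a non-abelian simple group has at least three prime divisors, so in fact $\pi(H)=\{2,3,5\}$; and the classification of the finite simple groups whose order involves only $2,3,5$ leaves exactly the three candidates $H\cong A_5,\ A_6$ or $U_4(2)$.

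Finally I would eliminate the two competitors by their $\kappa$-values. The Sylow $2$-subgroup of $A_5$ is elementary abelian, so $A_5$ has no element of order $4$ and $\mathcal{P}^*(A_5)=15K_1\sqcup 10K_2\sqcup 6K_4$, whence $\kappa(A_5)=3^{10}\cdot 5^{18}$; being coprime to $2$, this differs from $\kappa(A_6)$. For $U_4(2)$ the element orders are $\{1,2,3,4,5,6,9,12\}$, so although its power graph is \emph{not} a disjoint union of cliques, there are still no elements of order $10,15,20$ or $25$, and hence the order-$5$ elements form isolated $4$-cliques in $\mathcal{P}^*(U_4(2))$. Since $U_4(2)$ has a self-centralizing Sylow $5$-subgroup and a single class of elements of order $5$ of size $5184$, it has $1296$ subgroups $\mathbb{Z}_5$, so the $5$-adic valuation satisfies $v_5(\kappa(U_4(2)))\ge 3\cdot 1296=3888>108=v_5(\kappa(A_6))$. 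Thus $U_4(2)$ is excluded and $H\cong A_6$, which proves $h_{\mathcal{S}}(A_6)=1$.

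The main obstacle is the prime-restriction step $\pi(H)\subseteq\{2,3,5\}$. Its difficulty is that an order-$p$ element of $H$ may lie only inside cyclic subgroups of order $pq$ with $q\not\equiv 1\pmod p$, in which case the twin-clique formed by its generators has closed-neighbourhood size not divisible by $p$; guaranteeing that $p$ nonetheless divides $\kappa(H)$ for every odd $p\mid |H|$ is precisely what the divisor results of the earlier sections are needed for, and it is this uniform divisibility — rather than the concluding case-check on $A_5$ and $U_4(2)$, which is clean — that carries the weight of the argument. I also note that the reduction to three candidate groups relies on the classification of finite simple groups.
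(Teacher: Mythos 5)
Your computation of $\kappa(A_6)$ is correct and self-contained: since $\pi_e(A_6)=\{1,2,3,4,5\}$ and each involution lies in a unique cyclic subgroup of order $4$, one indeed gets $\mathcal{P}^*(A_6)=40K_2\oplus 45K_3\oplus 36K_4$ and hence $\kappa(A_6)=3^{40}\cdot 4^{90}\cdot 5^{108}$ (the paper instead imports this value from Theorem \ref{mnn}). Your endgame is also sound, and in one respect more complete than the paper's: the full list of simple groups with $\pi(H)\subseteq\{2,3,5\}$ is $A_5$, $A_6$, $U_4(2)$, and your elimination of $U_4(2)$ --- $5\in\mu(U_4(2))$, so its $1296$ cyclic subgroups of order $5$ are isolated $4$-cliques of $\mathcal{P}^*$, forcing $v_5(\kappa(U_4(2)))\geq 3888>108$ --- is valid (the paper's own candidate list, taken from \cite{zav}, omits $U_4(2)$).

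However, there is a genuine gap at exactly the step you describe as carrying the weight of the argument: the claim that every odd prime $p$ dividing $|H|$ must divide $\kappa(H)$, from which you deduce $\pi(H)\subseteq\pi(\kappa(A_6))\cup\{2\}=\{2,3,5\}$. You assert this uniform divisibility is ``precisely what the divisor results of the earlier sections are needed for,'' but those results do not prove it, and you give no other argument. Lemma \ref{B2} applies only when $p\in\mu(H)$, i.e.\ when $p$ is a \emph{maximal} element order; Theorem \ref{mu3} and Corollary \ref{B7} likewise concern $m\in\mu(H)$ (or a chosen vertex) and yield divisibility of $\det(\mathbf{J}+\mathbf{Q})=|H|^2\kappa(H)$, not of $\kappa(H)$: from $|H|\cdot m^{\phi(m)}\mid |H|^2\kappa(H)$ one can extract $p\mid\kappa(H)$ only when $v_p\bigl(m^{\phi(m)}\bigr)>v_p(|H|)$. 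In the problematic configuration you yourself identify --- every element of order $p$ lying only in maximal cyclic subgroups of order $pq$, with closed neighbourhood size prime to $p$ --- none of the paper's lemmas produces the divisor $p$, so the prime-restriction step is unsupported; it is a claim that would itself require a new proof.

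The paper avoids this issue entirely by a counting bound rather than a divisibility one: since $H$ is nonabelian simple, it has $c_p\geq p+1$ cyclic subgroups of order $p$ for each $p\in\pi(H)$, and these intersect pairwise trivially, so Lemma \ref{MR} gives $\kappa(H)>p^{(p-2)(p+1)}$; comparing with $13^{154}>2^{180}\cdot 3^{40}\cdot 5^{108}$ yields only $\pi(H)\subseteq\{2,3,5,7,11\}$. The primes $7$ and $11$ are then excluded from $\mu(H)$ by Lemma \ref{B2}, Zavarnitsine's tables reduce the candidates to $A_5$, $A_6$, $S_4(7)$, and $S_4(7)$ is killed by Theorem \ref{mu3} with $m=56$ --- note that there the valuation argument does work, because $v_7\bigl(56^{\phi(56)}\bigr)=24$ exceeds $v_7(|S_4(7)|)=4$, giving $7^{20}\mid\kappa(H)$. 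To repair your proof, replace your prime-restriction step by this counting argument and then handle $7$ and $11$ as the paper does (your treatment of $U_4(2)$ should then be kept, since it is needed and missing from the paper's case list); alternatively, supply an actual proof of your divisibility claim, which is not available from the results of this paper.
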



\section{Terminology and Previous Results}

The notation and definitions used in this paper are standard and taken mainly from \cite{Bi, atlas, Ro, WE}.
We will cite only a few. Let $\Gamma= (V,E)$  be a simple graph. We denote by $\mathbf{A} = \mathbf{A}(\Gamma)$  the adjacency
matrix of $\Gamma$. The Laplacian matrix $\mathbf{Q}$ of a graph $\Gamma$  is $\mathbf{\Delta}- \mathbf{A}$, where  $\mathbf{\Delta}$  is
the diagonal matrix whose $i$-th diagonal entry is the degree $v_i$ in $\Gamma$ and $\mathbf{A}$ is the adjacency matrix
of $\Gamma$. The $\mathbf{J}_{m\times n}$ and $\mathbf{O}_{m\times n}$ denote the matrixes with $m$ rows and $n$ columns, where each of whose entries is +1, and zero, respectively. Moreover, the  identity matrix is denoted by    $\mathbf{I}$.
A matrix $\mathcal{A}$ of size $n$, which is the $n\times n$ square matrix, is denoted by $\mathcal{A}_{n\times n}$. 

When $U\subseteq V$, the induced subgraph $\Gamma[U]$ is the subgraph
of $\Gamma$ whose vertex set is $U$ and whose edges are precisely the edges of $\Gamma$ which have both ends in $U$.
Two graphs are disjoint if they have no vertex in common, and edge-disjoint if they have no edge
in common. If $\Gamma_1$  and $\Gamma_2$  are disjoint, we refer to their union as a disjoint union, and generally
denote it by $\Gamma_1\oplus\Gamma_2$. By starting with a disjoint union of two graphs $\Gamma_1$  and $\Gamma_2$ and adding edges joining every vertex of $\Gamma_1$ to every vertex of $\Gamma_2$, one obtains the join of $\Gamma_1$ and $\Gamma_2$,  denoted $\Gamma_1\vee\Gamma_2$. A clique in a graph is a set of pairwise adjacent vertices. 

We denote by $\pi(n)$ the set of the prime divisors of a positive integer $n$. Given
a group $G$, we will write $\pi(G)$  instead of $\pi(|G|)$, and  denote by $\pi_e(G)$ the
set of orders of all elements in a group $G$ and call this set the spectrum of $G$. The spectrum $\pi_e(G)$
of $G$ is closed under divisibility and determined uniquely from the set $\mu(G)$ of those elements in
$\pi_e(G)$ that are maximal under the divisibility relation. In the case when $\mu(G)$ is a one-element set
$\{n\}$, we write $\mu(G) = n$. Finally, two
notation  $\phi(n)$ and  $c_m(G)$ are denoted in particular for
the  Euler's totient function, for positive integer $n$, and the
number of distinct  cyclic subgroups of order $m$ of $G$.
Occasionally, when the group we are considering is clear from the
context, we will simply write $c_m$ instead of $c_m(G)$.  All further
unexplained notation is standard and refers to \cite{atlas}.

At the following,  we give several auxiliary results to be used later. First, we point some important 
lemmas due to the power graph. 

\begin{Lemma}\cite{CH}\label{CH1} Let $G$ be a finite group. Then $\mathcal{P}(G)$ is complete
if and only if $G$ is a cyclic group of order $1$ or $p^m$ for some prime number $p$ and
for some natural number $m$.
\end{Lemma}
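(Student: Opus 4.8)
The plan is to prove both implications by translating the combinatorial notion of completeness into a purely group-theoretic statement about the poset of cyclic subgroups of $G$ ordered by inclusion. The central observation is that, directly from the definition, two distinct elements $x,y$ are adjacent in $\mathcal{P}(G)$ precisely when $\langle x\rangle$ and $\langle y\rangle$ are comparable under inclusion; hence $\mathcal{P}(G)$ is complete if and only if the family $\{\langle g\rangle : g\in G\}$ of all cyclic subgroups of $G$ is totally ordered by inclusion, i.e. forms a chain. Everything else follows from this reformulation.

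For the easy direction ($\Leftarrow$), suppose $G$ is trivial or cyclic of order $p^m$. If $|G|=1$ the graph is a single vertex and is vacuously complete. If $G=\langle g\rangle$ with $|g|=p^m$, then every subgroup of $G$ is cyclic and the subgroups form the single chain $1\subset H_1\subset\cdots\subset H_m=G$ with $|H_i|=p^i$; in particular any two cyclic subgroups are comparable, so every pair of distinct vertices is adjacent and $\mathcal{P}(G)$ is complete.

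For the main direction ($\Rightarrow$), assume $\mathcal{P}(G)$ is complete and $|G|>1$. First I would establish that $G$ is cyclic: since the cyclic subgroups form a finite chain, there is a unique maximal cyclic subgroup $M=\langle g_0\rangle$; as every $g\in G$ lies in $\langle g\rangle\subseteq M$, we conclude $G=M$, so $G$ is cyclic, say $G\cong\mathbb{Z}_n$. Next I would show $n$ is a prime power: if $n$ had two distinct prime divisors $p\ne q$, then $\mathbb{Z}_n$ would contain subgroups of orders $p$ and $q$, which are incomparable because $p,q>1$ are coprime (by Lagrange neither order divides the other), contradicting the chain condition. Hence $\pi(n)$ is a single prime and $n=p^m$.

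The argument is short, and really the only place that needs care is justifying the passage from ``every pair of vertices is adjacent'' to ``the cyclic subgroups form a chain,'' and then using finiteness to extract a unique maximal cyclic subgroup that must exhaust $G$. An alternative route reaches the same conclusion via Cauchy's theorem (producing incomparable prime-order subgroups whenever $|\pi(G)|\ge 2$) combined with the classical fact that a non-cyclic $p$-group contains at least two incomparable cyclic subgroups; I prefer the chain argument above, since it yields both cyclicity and prime-power order in one stroke and avoids the separate case analysis of $p=2$ versus $p$ odd.
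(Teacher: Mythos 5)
Your proof is correct, but note that there is nothing in the paper to compare it against: the paper imports this lemma from \cite{CH} (Chakrabarty, Ghosh and Sen) as a known result and gives no proof of its own, so your argument stands or falls on its own merits. It stands. The reformulation ``$\mathcal{P}(G)$ is complete if and only if the cyclic subgroups of $G$ form a chain under inclusion'' is exactly right, and both uses of it check out: for two distinct subgroups $\langle x\rangle\neq\langle y\rangle$ the generators are distinct elements, so completeness forces comparability, and conversely comparability of $\langle x\rangle$ and $\langle y\rangle$ gives adjacency even when the two subgroups coincide. Finiteness then gives a maximum element $M$ of the chain, and $\langle g\rangle\subseteq M$ for every $g$ yields $G=M$ cyclic; the Lagrange argument ruling out two distinct primes is airtight. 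For comparison, the standard proof in \cite{CH} runs in the opposite order: it first shows $G$ is a $p$-group (elements of coprime orders greater than $1$ can never be adjacent, since any power of an element of order $q$ has order dividing $q$), and then extracts cyclicity from an element $g$ of maximal order, using $g\in\langle x\rangle\Rightarrow o(x)=o(g)\Rightarrow\langle x\rangle=\langle g\rangle$. Your chain formulation buys a slightly cleaner package, delivering cyclicity and the prime-power condition from the single poset-theoretic fact, whereas the classical route isolates the coprime-orders observation, which is reused elsewhere in this literature (e.g., it is the opening move in the proof of Lemma 3.2 of this paper). Either way, your write-up is complete and needs no repair.
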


\begin{Lemma}\cite{MRS}\label{MRS1} Let $G$ be a finite $p$-group, where $p$ is a prime. Then $\mathcal{P}^*(G)$ is connected
if and only if $G$ has a unique minimal subgroup.
\end{Lemma}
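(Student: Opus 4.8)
The plan is to characterize the connected components of $\mathcal{P}^*(G)$ explicitly in terms of the minimal subgroups of $G$, and in fact to show that the number of connected components equals the number of minimal subgroups. Since $G$ is a finite $p$-group, its minimal subgroups are precisely the subgroups of order $p$ (every nontrivial subgroup contains an element of order $p$, and a subgroup of order $p$ has no proper nontrivial subgroup), and every nontrivial element $x$ generates a cyclic $p$-group $\langle x\rangle$, which contains exactly one subgroup of order $p$. I would denote this unique subgroup by $m(x)$, so that $m$ is a well-defined map from the vertex set of $\mathcal{P}^*(G)$ to the set of minimal subgroups of $G$.

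The key observation is that $m$ is invariant along edges. First I would verify: if $x$ and $y$ are adjacent in $\mathcal{P}^*(G)$, then $\langle x\rangle \subseteq \langle y\rangle$ or $\langle y\rangle\subseteq\langle x\rangle$; assuming the former, the order-$p$ subgroup of $\langle x\rangle$ is also an order-$p$ subgroup of $\langle y\rangle$, and by uniqueness in the cyclic $p$-group $\langle y\rangle$ these coincide, so $m(x)=m(y)$. Consequently $m$ is constant on every connected component, which shows that vertices lying over distinct minimal subgroups can never be joined by a path. In particular, if $G$ has two distinct minimal subgroups $\langle a\rangle\neq\langle b\rangle$ (with $a,b$ of order $p$), then $m(a)\neq m(b)$, so $a$ and $b$ lie in different components and $\mathcal{P}^*(G)$ is disconnected; this establishes the contrapositive of the ``only if'' direction.

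For the converse, I would show that each fiber $m^{-1}(N)$ induces a connected subgraph. Fix a minimal subgroup $N$ and a generator $z$ of $N$. For every nontrivial $x$ with $m(x)=N$ we have $\langle z\rangle = N\subseteq\langle x\rangle$, so $z$ is adjacent to $x$, and likewise to every other element of the fiber; thus $z$ is a dominating vertex of $m^{-1}(N)$, and that fiber is connected. Combining the two parts, the connected components of $\mathcal{P}^*(G)$ are exactly the fibers $m^{-1}(N)$, one for each minimal subgroup $N$. Hence $\mathcal{P}^*(G)$ is connected if and only if there is exactly one minimal subgroup, which is the claim.

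I expect the main obstacle to be the ``only if'' direction, namely isolating the adjacency-invariant $m(x)$ and confirming that it is genuinely preserved across every edge; once that invariant is in place, both the disconnectedness argument (when several minimal subgroups exist) and the domination argument (within a single fiber) follow immediately.
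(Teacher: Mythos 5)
Your proof is correct, and it is worth noting that the paper itself gives no proof of this lemma --- it is quoted from \cite{MRS} --- so the closest in-paper comparison is the proof of Lemma \ref{B1}, which sharpens the statement to: $\mathcal{P}^*(G)$ has exactly $c_p$ components. That proof runs by contradiction along a shortest path $x=x_0\sim x_1\sim\cdots\sim x_n=y$ between generators of two distinct subgroups of order $p$, using that $\langle x_1\rangle\setminus\{1\}$ is a clique to shorten the path, and it imports the ``unique minimal subgroup $\Rightarrow$ connected'' direction from the quoted lemma rather than proving it. Your route is genuinely different and arguably cleaner: you isolate the edge-invariant $m(x)$ (the unique subgroup of order $p$ inside the cyclic $p$-group $\langle x\rangle$), check it is preserved across every edge by uniqueness of the order-$p$ subgroup in a cyclic $p$-group, and then show each fiber $m^{-1}(N)$ is connected because a generator $z$ of $N$ satisfies $\langle z\rangle=N\subseteq\langle x\rangle$ for every $x$ in the fiber, hence dominates it. Since every minimal subgroup $N$ arises as $m(z)$ for its own generator, the fibers are nonempty, so the components biject with the minimal subgroups. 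This buys you both directions of the lemma in one stroke (the shortest-path argument only handles ``connected $\Rightarrow$ unique''), and it simultaneously reproves Lemma \ref{B1} without any path-shortening or case analysis. The only point worth flagging explicitly is the tacit hypothesis $G\neq 1$ (so that minimal subgroups exist and the fibers exhaust the vertex set), which the paper's statement also leaves implicit; with that noted, your argument is complete.
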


\begin{Corollary}\cite{MRS}\label{MRS2} Let $G$ be a finite $p$-group, where $p$ is a prime. Then $\mathcal{P}^*(G)$ is
connected if and only if $G$ is either cyclic or generalized quaternion.
\end{Corollary}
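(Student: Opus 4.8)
The plan is to reduce this graph-theoretic statement to a classical fact about finite $p$-groups by invoking the preceding lemma. First I would note that in a $p$-group every nontrivial subgroup contains an element of order $p$, hence a subgroup of order $p$; therefore the minimal subgroups of $G$ are exactly its subgroups of order $p$, and ``$G$ has a unique minimal subgroup'' means precisely ``$G$ has a unique subgroup of order $p$''. By Lemma~\ref{MRS1}, $\mathcal{P}^*(G)$ is connected if and only if $G$ has a unique subgroup of order $p$, so the corollary is equivalent to the assertion that a finite $p$-group has a unique subgroup of order $p$ exactly when it is cyclic or generalized quaternion.

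The forward implication of that assertion is routine: the cyclic group $\mathbb{Z}_{p^n}$ has a single subgroup of each order dividing $p^n$, and the generalized quaternion group $Q_{2^n}$ has a single involution, namely its central element, so in each case there is a unique subgroup of order $p$. For the converse I would induct on $|G|$. The key observation is that the unique subgroup of order $p$ lies inside every nontrivial subgroup of $G$, so every subgroup of $G$ again has a unique subgroup of order $p$; by induction every proper subgroup of $G$ is therefore cyclic or generalized quaternion. If $G$ is abelian, writing it as a direct product of $k$ cyclic $p$-groups shows that the number of subgroups of order $p$ equals $(p^{k}-1)/(p-1)$, which forces $k=1$ and $G$ cyclic.

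If $G$ is nonabelian I would pass to a maximal subgroup $M$, which is normal of index $p$ and, by induction, cyclic or generalized quaternion. When $M$ is cyclic, analysing how a generator of $G/M\cong\mathbb{Z}_p$ acts on $M$ recovers the classical list of $p$-groups possessing a cyclic maximal subgroup; sieving that list by the hypothesis ``unique subgroup of order $p$'' discards the dihedral, semidihedral and modular types (each of which contains a second subgroup of order $p$) and leaves only $\mathbb{Z}_{p^n}$ and $Q_{2^n}$, while for odd $p$ the only minimal non-cyclic candidate $\mathbb{Z}_p\times\mathbb{Z}_p$ is excluded because it has $p+1$ subgroups of order $p$. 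In particular, for odd $p$ every maximal subgroup is cyclic, so this step finishes the odd case with $G$ forced to be cyclic.

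The main obstacle is the residual case $p=2$ in which \emph{no} maximal subgroup is cyclic, so that every maximal subgroup of $G$ is itself generalized quaternion. Here I would exploit that such an $M$ has a characteristic cyclic subgroup $C$ (the cyclic maximal subgroup of $M$ when $|M|\ge 16$, and the centre when $M\cong Q_8$), which is then normal in $G$; pushing the analysis down to $G/C$ and back up, and using that the unique involution of $G$ is necessarily central, I would show that $G$ cannot avoid possessing a cyclic maximal subgroup unless it is already generalized quaternion, thereby collapsing this case into the previous one. This quaternion bookkeeping for $p=2$, rather than the elementary counting for $M$ cyclic, is where the genuine difficulty lies, and it is exactly the point at which the generalized quaternion family is singled out.
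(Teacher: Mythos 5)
Your opening paragraph is exactly the derivation the paper intends: this Corollary carries no proof in the present paper at all --- it is imported from \cite{MRS}, where it follows by combining Lemma~\ref{MRS1} (connectivity of $\mathcal{P}^*(G)$ is equivalent to $G$ having a unique minimal subgroup) with the classical Burnside-type theorem that a finite $p$-group with a unique subgroup of order $p$ is cyclic or, for $p=2$, generalized quaternion; your observation that the minimal subgroups of a $p$-group are precisely its subgroups of order $p$ is the correct bridge between the two statements. Where you depart from the paper is in electing to reprove the classical theorem rather than cite it (it is standard, see e.g.\ \cite{Ro}): your treatment of the abelian case, of odd $p$ via minimal noncyclic subgroups (the only minimal noncyclic $p$-group for odd $p$ being $\mathbb{Z}_p\times\mathbb{Z}_p$, which has $p+1$ subgroups of order $p$), and of the case where $G$ has a cyclic maximal subgroup, is sound, though note that the ``classical list'' of $p$-groups with a cyclic maximal subgroup is itself a nontrivial imported theorem. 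The one genuine gap is the final case, $p=2$ with every maximal subgroup generalized quaternion: there you state only a plan (``I would show that $G$ cannot avoid possessing a cyclic maximal subgroup\dots''), and this is precisely where the known proofs do real work, so as written the induction is not closed. The gap is harmless for the Corollary itself --- citing the classical theorem, as \cite{MRS} does, finishes the proof immediately --- but if you want the self-contained argument you sketched, that subcase must actually be carried out (for instance via the normal characteristic cyclic subgroup $C$ you identify, computing $C_G(C)$ and the action of $G/C$, as in the standard textbook proofs), since appealing to ``quaternion bookkeeping'' is a promissory note, not an argument.
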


\begin{Lemma}\cite {MRS} \label{MRS2} Let $G$ be a finite group. If $H$ is a subgroup of $G$, 
then $\mathcal{P}(H)$  is a subgraph of $\mathcal{P}(G)$. In particular, if $x$
is a $p$-element of $G$, where $p$ is a prime, then $\langle x \rangle$  is a clique in $\mathcal{P}(G)$.
\end{Lemma}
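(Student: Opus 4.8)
The plan is to verify the two assertions directly from the definition of the power graph, relying on the elementary fact that the cyclic subgroup generated by an element is an intrinsic object: it does not depend on the ambient group in which the element sits.

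First I would dispose of the subgraph claim. Let $H$ be a subgroup of $G$. The vertex set of $\mathcal{P}(H)$ is $H$, which is a subset of the vertex set $G$ of $\mathcal{P}(G)$, so every vertex of $\mathcal{P}(H)$ is already a vertex of $\mathcal{P}(G)$. For the edges, I would fix two distinct vertices $x,y \in H$. The crucial observation is that the cyclic subgroup $\langle x \rangle$---the set of all powers of $x$---is literally the same set whether $x$ is regarded as an element of $H$ or of $G$, and likewise for $\langle y \rangle$. Consequently the defining adjacency condition, namely $\langle x \rangle \subseteq \langle y \rangle$ or $\langle y \rangle \subseteq \langle x \rangle$, holds in $\mathcal{P}(H)$ precisely when it holds in $\mathcal{P}(G)$. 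Hence $x$ and $y$ are joined in $\mathcal{P}(H)$ exactly when they are joined in $\mathcal{P}(G)$, so every edge of $\mathcal{P}(H)$ is an edge of $\mathcal{P}(G)$. This establishes that $\mathcal{P}(H)$ is a subgraph of $\mathcal{P}(G)$ (indeed the subgraph of $\mathcal{P}(G)$ induced on the vertex set $H$).

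For the ``in particular'' part, suppose $x$ is a $p$-element, so that $\langle x \rangle$ is a cyclic group whose order is $1$ or a prime power $p^m$. By Lemma~\ref{CH1}, the power graph $\mathcal{P}(\langle x \rangle)$ is complete. Applying the first part to the subgroup $H = \langle x \rangle \leq G$, I obtain that $\mathcal{P}(\langle x \rangle)$ is a subgraph of $\mathcal{P}(G)$; since it is complete, all of its vertices---the elements of $\langle x \rangle$---are pairwise adjacent in $\mathcal{P}(G)$. This is exactly the assertion that $\langle x \rangle$ is a clique in $\mathcal{P}(G)$.

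I do not anticipate a genuine obstacle here; the argument is in essence a bookkeeping check against the definition. The one point that must be stated with care is the intrinsic nature of cyclic subgroup generation, which guarantees that the containment relations between $\langle x \rangle$ and $\langle y \rangle$ are unchanged upon passing to a larger group. Once that is in hand, the subgraph conclusion is immediate from the definition, and the clique conclusion follows by combining it with the already-established Lemma~\ref{CH1}.
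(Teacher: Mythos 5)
Your proof is correct. The paper itself gives no proof of this lemma---it is imported verbatim from the reference [MRS]---and your argument (the cyclic subgroup $\langle x\rangle$ is the same set in $H$ as in $G$, so adjacency transfers, with the clique claim then following from Lemma~\ref{CH1} applied to the cyclic $p$-group $\langle x\rangle$) is exactly the standard verification one would find there, so there is nothing to compare beyond noting agreement.
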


In the sequel, we collected some results related to  the number of spanning trees of a simple graph $\Gamma$. 
The following one is well known, see for example [\cite{WE}, Proposition 2.2.8].

\begin{Theorem}\cite{WE}\label{WE1}(Deletion-Contraction Theorem) The number of spanning trees of a graph $\Gamma$
satisfies the deletion-contraction recurrence $\kappa(\Gamma)=\kappa(\Gamma-e)+\kappa(\Gamma\cdot e)$,
 where $e\in E(\Gamma)$. In particular,
if $e\in E(\Gamma)$ is a cut-edge, then $\kappa(\Gamma)=\kappa(\Gamma\cdot e)$.
\end{Theorem}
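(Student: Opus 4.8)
The plan is to prove the recurrence by partitioning the collection of spanning trees of $\Gamma$ according to whether or not each contains the fixed edge $e$. Writing $e = uv$, I would first observe that every spanning tree $T$ of $\Gamma$ lies in exactly one of two disjoint families: those with $e \notin E(T)$ and those with $e \in E(T)$. The entire argument then reduces to identifying each of these two families with the set of spanning trees of one of the two smaller graphs $\Gamma - e$ and $\Gamma \cdot e$.

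For the first family, a spanning tree of $\Gamma$ that avoids $e$ is precisely a tree on the full vertex set of $\Gamma$ all of whose edges lie in $\Gamma - e$. Since $\Gamma$ and $\Gamma - e$ share the same vertex set, such trees are exactly the spanning trees of $\Gamma - e$. Hence the number of spanning trees of $\Gamma$ not containing $e$ equals $\kappa(\Gamma - e)$.

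For the second family, I would set up a bijection between the spanning trees of $\Gamma$ containing $e$ and the spanning trees of the contraction $\Gamma \cdot e$, where contracting $e$ identifies the endpoints $u$ and $v$ into a single vertex. Given a spanning tree $T$ of $\Gamma$ with $e \in E(T)$, contracting $e$ in $T$ produces a spanning connected acyclic subgraph of $\Gamma \cdot e$, that is, a spanning tree of $\Gamma \cdot e$; the number of vertices drops by one and so does the number of edges, so the tree property is preserved. Conversely, given a spanning tree of $\Gamma \cdot e$, reinserting $e$ (splitting the merged vertex back into $u$ and $v$) yields a spanning tree of $\Gamma$ that contains $e$. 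These two operations are mutually inverse, so the number of spanning trees of $\Gamma$ containing $e$ equals $\kappa(\Gamma \cdot e)$. Summing the counts from the two disjoint families gives $\kappa(\Gamma) = \kappa(\Gamma - e) + \kappa(\Gamma \cdot e)$.

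Finally, for the special case in which $e$ is a cut-edge, I would note that deleting $e$ disconnects $\Gamma$, so $\Gamma - e$ admits no spanning tree and $\kappa(\Gamma - e) = 0$; substituting this into the recurrence immediately yields $\kappa(\Gamma) = \kappa(\Gamma \cdot e)$. The only delicate point, and the step I expect to require the most care, is verifying that the contraction correspondence is genuinely a bijection: one must check that contracting a tree edge neither creates a cycle nor disconnects the tree, and that distinct spanning trees of $\Gamma$ through $e$ never collapse to the same spanning tree of $\Gamma \cdot e$.
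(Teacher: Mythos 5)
The paper offers no proof of this statement at all: it is quoted as a known auxiliary result from \cite{WE} (Proposition 2.2.8 of West), so there is no in-paper argument to compare against. Your proposal is the standard textbook proof of the deletion--contraction recurrence --- partition the spanning trees of $\Gamma$ by whether they contain $e$, identify the $e$-avoiding ones with spanning trees of $\Gamma-e$, and put the $e$-containing ones in bijection with spanning trees of $\Gamma\cdot e$ --- and this is essentially the argument one finds in the cited source. The outline, including the cut-edge corollary via $\kappa(\Gamma-e)=0$ when $\Gamma-e$ is disconnected, is correct.

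The one substantive issue is precisely the ``delicate point'' you flag at the end but do not resolve: the injectivity of the contraction map holds only if $\Gamma\cdot e$ is taken as a \emph{multigraph}, i.e.\ parallel edges created by the contraction are retained, with each edge of $\Gamma\cdot e$ remembered as a specific edge of $\Gamma-e$. This matters even when $\Gamma$ itself is simple, as this paper assumes all graphs to be. Take $\Gamma=K_3$ with edge $e=uv$: then $\kappa(\Gamma)=3$ and $\kappa(\Gamma-e)=1$, so the recurrence forces $\kappa(\Gamma\cdot e)=2$, which is the count for the double edge obtained by honest contraction; if one simplified $\Gamma\cdot e$ to a single edge, the recurrence would fail. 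Concretely, the two spanning trees $\{uv,uw\}$ and $\{uv,vw\}$ of $K_3$ contract to the \emph{same} edge set in the simplified graph, and only the multigraph convention keeps them distinct. So to close your proof you should state the convention explicitly and then note that, with edges of $\Gamma\cdot e$ labelled by edges of $\Gamma-e$, the inverse map is simply ``take the same edge set and adjoin $e$,'' after which both composites are visibly the identity. With that convention made explicit, your verification that contraction of a tree edge preserves connectivity and acyclicity (vertex and edge counts each drop by one) goes through without difficulty, and the proof is complete.
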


\begin{Theorem}\cite{MR}\label{MR1}  Let $\Gamma$ be a  connected graph and let $v$  be a cut vertex of $\Gamma$ with
$$\Gamma-v=\Gamma_1\oplus \Gamma_2\oplus\cdots \oplus \Gamma_c,$$
where $\Gamma_i$, $i=1, 2, \ldots, c$, is the ith connected component of $\Gamma-v $ and $c=c(\Gamma-v)$. Set $\tilde{\Gamma_i}=\Gamma_i+v$. 
Then, there holds
$$\kappa(\Gamma)= \kappa(\tilde{\Gamma_1})\times\kappa(\tilde{\Gamma_2})\times\cdots \kappa(\tilde{\Gamma_c}).$$
\end{Theorem}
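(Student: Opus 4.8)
The plan is to prove the factorization by exhibiting a bijection between the spanning trees of $\Gamma$ and the tuples $(T_1,\dots,T_c)$ in which each $T_i$ is a spanning tree of $\tilde{\Gamma_i}$; since there are exactly $\prod_{i=1}^{c}\kappa(\tilde{\Gamma_i})$ such tuples, the identity $\kappa(\Gamma)=\prod_{i=1}^{c}\kappa(\tilde{\Gamma_i})$ follows at once. The structural fact I would record first is that, because $\Gamma_1,\dots,\Gamma_c$ are the distinct connected components of $\Gamma-v$, no edge of $\Gamma$ joins a vertex of $\Gamma_i$ to a vertex of $\Gamma_j$ when $i\neq j$. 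Hence every edge of $\Gamma$ belongs to exactly one $\tilde{\Gamma_i}=\Gamma_i+v$, the vertex sets $V(\tilde{\Gamma_i})$ meet only in the common cut vertex $v$, and for each $u\in V(\Gamma_i)$ the neighbours of $u$ in $\Gamma$ are precisely its neighbours in $\tilde{\Gamma_i}$, so that $\deg_\Gamma(u)=\deg_{\tilde{\Gamma_i}}(u)$.

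For the forward map, given a spanning tree $T$ of $\Gamma$ I set $T_i:=T\cap\tilde{\Gamma_i}$, the subgraph on $V(\Gamma_i)\cup\{v\}$ formed by those edges of $T$ lying in $\tilde{\Gamma_i}$. Each $T_i$ is acyclic, being a subgraph of the tree $T$, and it spans $\tilde{\Gamma_i}$ by construction; it is connected because the unique $T$-path from any $u\in V(\Gamma_i)$ to $v$ cannot leave $\Gamma_i$ except at its terminal vertex $v$, so it stays inside $\tilde{\Gamma_i}$. Thus each $T_i$ is a spanning tree of $\tilde{\Gamma_i}$. For the inverse map, given spanning trees $T_i$ of the $\tilde{\Gamma_i}$ I form the edge-union $T:=\bigcup_{i=1}^{c}T_i$; it spans $\Gamma$ since the sets $V(\tilde{\Gamma_i})$ cover $V(\Gamma)$, and it is connected because every vertex of $\Gamma_i$ reaches $v$ within $T_i$. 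The one delicate point, which I expect to be the main obstacle, is the acyclicity of this union: here I would invoke that a cut vertex lies on no cycle crossing two blocks, namely that every cycle of $\Gamma$ is $2$-connected, so deleting $v$ from it leaves a single path confined to one component $\Gamma_i$, whence every cycle of $\Gamma$ is contained in a single $\tilde{\Gamma_i}$. Since each $T_i$ is a tree, no such cycle can survive, and $T$ is a spanning tree of $\Gamma$.

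The two constructions are mutually inverse, and the factorization $\kappa(\Gamma)=\kappa(\tilde{\Gamma_1})\times\cdots\times\kappa(\tilde{\Gamma_c})$ follows. Alternatively, one can argue through Kirchhoff's Matrix-Tree Theorem: ordering the vertices with $v$ placed last and deleting the row and column of $v$ from the Laplacian $\mathbf{Q}(\Gamma)$, the structural fact above renders the resulting reduced matrix block-diagonal, its $i$-th diagonal block being exactly the reduced Laplacian of $\tilde{\Gamma_i}$ obtained by deleting $v$ --- the equality of diagonal entries being the identity $\deg_\Gamma(u)=\deg_{\tilde{\Gamma_i}}(u)$ noted above. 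As the determinant of a block-diagonal matrix is the product of the determinants of its blocks, and each such determinant equals $\kappa(\tilde{\Gamma_i})$ by the Matrix-Tree Theorem applied to $\tilde{\Gamma_i}$, the same product formula results.
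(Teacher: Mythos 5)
The paper states this theorem without proof, citing \cite{MR}, so there is no in-text argument to compare yours against; judged on its own, your proof is correct and complete. Your structural observations --- every edge of $\Gamma$ lies in exactly one $\tilde{\Gamma_i}$, the sets $V(\tilde{\Gamma_i})$ pairwise intersect in $\{v\}$, and $\deg_\Gamma(u)=\deg_{\tilde{\Gamma_i}}(u)$ for $u\in V(\Gamma_i)$ --- are exactly what is needed, and you correctly identify and close the only delicate point of the bijection, namely acyclicity of $\bigcup_{i}T_i$: since deleting $v$ from a cycle leaves a connected subgraph of $\Gamma-v$, every cycle of $\Gamma$ is confined to a single $\tilde{\Gamma_i}$, where it would contradict $T_i$ being a tree. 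The forward map is likewise sound: the unique $T$-path from $u\in V(\Gamma_i)$ to $v$ can only leave $\Gamma_i$ through $v$, which it visits only as its terminus, so the path stays in $\tilde{\Gamma_i}$; and edge-disjointness of the $\tilde{\Gamma_i}$ makes the two maps mutually inverse, yielding $\kappa(\Gamma)=\kappa(\tilde{\Gamma_1})\times\cdots\times\kappa(\tilde{\Gamma_c})$. Your alternative Matrix-Tree argument is also valid and fits the present paper's toolkit particularly well, since the author works with determinants of Laplacian-type matrices throughout (Theorem \ref{TE1}, Lemma \ref{DEG}): after deleting the row and column of $v$ from $\mathbf{Q}(\Gamma)$, the absence of edges between distinct components makes the reduced matrix block-diagonal, your degree identity shows the $i$-th block is exactly the reduced Laplacian of $\tilde{\Gamma_i}$ at $v$, and taking determinants gives the same product formula. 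Either of your two arguments would serve as a self-contained replacement for the citation.
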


\begin{Lemma}\cite{MR}\label{MR} If $H_1$, $H_2$, . . . , $H_t$ are nontrivial subgroups of
a group $G$ such that $H_i \cap H_j = \{1\}$, for each $1\leqslant i <j\leqslant t$, then we have
$\kappa(G) >\kappa(H_1)\cdot \kappa(H_2)\cdots \kappa(H_t)$.
\end{Lemma}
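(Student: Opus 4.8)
The plan is to isolate inside $\mathcal{P}(G)$ the subgraph assembled from the $H_i$, to evaluate its complexity exactly by the cut-vertex formula, and then to compare it with $\mathcal{P}(G)$ via deletion--contraction. The first ingredient is a purely group-theoretic fact about edges: if $x\in H_i$ and $y\in H_j$ with $i\neq j$ and $x,y\neq 1$, then $x$ and $y$ are non-adjacent in $\mathcal{P}(G)$. Indeed, an edge would force $\langle x\rangle\subseteq\langle y\rangle$ or $\langle y\rangle\subseteq\langle x\rangle$; the first puts $x\in\langle y\rangle\subseteq H_j$, hence $x\in H_i\cap H_j=\{1\}$, a contradiction, and the second is symmetric. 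By Lemma~\ref{MRS2} each $\mathcal{P}(H_i)$ is the induced subgraph of $\mathcal{P}(G)$ on $H_i$, so the induced subgraph $\Gamma:=\mathcal{P}(G)[W]$ on $W=\bigcup_i H_i$ is precisely the one-point union of $\mathcal{P}(H_1),\dots,\mathcal{P}(H_t)$ amalgamated at the identity $1$, and (for $t\ge 2$) the vertex $1$ is a cut vertex of $\Gamma$.

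Next I would compute $\kappa(\Gamma)$. Removing $1$ splits $\Gamma$ into the connected components of the graphs $\mathcal{P}^*(H_i)$, so Theorem~\ref{MR1} expresses $\kappa(\Gamma)$ as the product of $\kappa(\widetilde C)$ over all these components $C$, each with $1$ reattached. Collecting the factors belonging to a fixed index $i$ and applying Theorem~\ref{MR1} to $\mathcal{P}(H_i)$ alone, their product is exactly $\kappa(\mathcal{P}(H_i))=\kappa(H_i)$; hence $\kappa(\Gamma)=\prod_{i=1}^t\kappa(H_i)$.

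It then remains to pass from $\Gamma$ to $\mathcal{P}(G)$. Since $1$ is adjacent to every vertex of $\mathcal{P}(G)$, I attach each element of $G\setminus W$ to $1$ by a single edge, obtaining a connected spanning subgraph $\Gamma^{*}\subseteq\mathcal{P}(G)$; every added edge is a bridge, so the cut-edge clause of Theorem~\ref{WE1} gives $\kappa(\Gamma^{*})=\kappa(\Gamma)=\prod_i\kappa(H_i)$. Finally, $\mathcal{P}(G)$ arises from $\Gamma^{*}$ by reinstating the remaining edges one at a time; for a connected $\Lambda$ the identity $\kappa(\Lambda+e)=\kappa(\Lambda)+\kappa((\Lambda+e)\cdot e)$ of Theorem~\ref{WE1} shows each genuine edge addition strictly increases the complexity, whence $\kappa(G)>\prod_i\kappa(H_i)$ as soon as $\mathcal{P}(G)$ carries even one edge absent from $\Gamma^{*}$.

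The main obstacle is exactly this last point: guaranteeing a surplus edge, that is, $\mathcal{P}(G)\neq\Gamma^{*}$. The preceding steps already deliver the non-strict bound $\kappa(G)\ge\prod_i\kappa(H_i)$ with no extra hypothesis, so all the force of the statement lies in strictness. I would secure the surplus edge from any adjacency of $\mathcal{P}(G)$ that is neither internal to a single $H_i$ nor of pendant type---for instance an element outside $\bigcup_i H_i$ lying in a cyclic subgroup of order greater than $2$ (supplying the edge $\{g,g^2\}$ with $g^2\neq 1$), or two distinct $H_i$ sharing a common larger cyclic subgroup. Care is needed because in genuinely degenerate configurations, where the chosen $H_i$ already exhaust every non-pendant edge, the complexity can equal the product; isolating such cases (they do not occur for the nonabelian simple groups to which the lemma is applied) and ruling them out is the delicate part of the argument, while the rest is the essentially formal combination of Theorem~\ref{MR1} and Theorem~\ref{WE1} described above.
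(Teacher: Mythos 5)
Your reduction is sound as far as it goes, and since the paper itself offers no proof of this lemma (it is imported wholesale from \cite{MR}), there is no internal argument to measure you against; your route through the induced one-point union $\Gamma$, the cut-vertex product formula, the pendant extension $\Gamma^{*}$, and edge-by-edge deletion--contraction is the natural one. Every step up to and including $\kappa(G)\ge\prod_{i=1}^{t}\kappa(H_i)$ is correct: $\mathcal{P}(H)$ does sit inside $\mathcal{P}(G)$ as an \emph{induced} subgraph (the adjacency condition $\langle x\rangle\subseteq\langle y\rangle$ or $\langle y\rangle\subseteq\langle x\rangle$ is intrinsic to the pair of cyclic subgroups), cross-adjacencies between distinct $H_i$ are impossible exactly as you argue, the grouping of the components of $\Gamma-1$ by index $i$ recovers $\prod_i\kappa(H_i)$, and adding an edge $e$ to a connected graph $\Lambda$ strictly increases the tree-number because $\kappa\bigl((\Lambda+e)\cdot e\bigr)\ge 1$.

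The difficulty you flag at the end, however, is not a removable delicacy: the strict inequality is false as the lemma is printed, so no argument can close your gap without changing the hypotheses. Take $G=\mathbb{Z}_2\times\mathbb{Z}_2$ and $H_1,H_2$ two of its subgroups of order $2$: then $H_1\cap H_2=\{1\}$ and $\kappa(H_1)=\kappa(H_2)=1$, while $\mathcal{P}(G)$ is the star $K_{1,3}$, so $\kappa(G)=1$, not $>1$. Likewise $G=S_3$ with its four nontrivial cyclic subgroups gives $\kappa(S_3)=3=\kappa(\mathbb{Z}_3)\cdot 1\cdot 1\cdot 1$. Your own construction locates the obstruction exactly: $\mathcal{P}(G)=\Gamma^{*}$, hence equality, precisely when every element of $G\setminus\bigcup_i H_i$ is an involution; whereas if some $g\notin\bigcup_i H_i$ has order greater than $2$, then $\{g,g^{-1}\}$ is a surplus edge (note $g^{-1}\notin\bigcup_i H_i$ since subgroups are inverse-closed, and $g^{-1}\ne g$), and strictness follows. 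So what your argument actually proves is $\kappa(G)\ge\prod_i\kappa(H_i)$, with strict inequality under the additional hypothesis that some element outside $\bigcup_i H_i$ has order exceeding $2$ --- a hypothesis satisfied in the one place this paper invokes the lemma, where $G$ is a nonabelian simple group, the $H_i$ are cyclic of prime order $p$, and any element whose order is a different odd prime lies outside their union (and the non-strict bound already suffices for the numerical contradiction at $p\ge 13$). You should therefore state the corrected, weaker lemma rather than promise to ``rule out'' the degenerate configurations: they are genuine counterexamples to the statement as quoted, not cases awaiting elimination.
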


\begin{Corollary}\cite{MR}\label{MR4} Let $G$ be a finite group and let $p$ be the smallest prime such that $\kappa(G) <p^{(p-2)}$ 
Then $\pi(G)\subseteq \pi((p-1)!)$.
\end{Corollary}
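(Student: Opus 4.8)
The plan is to argue by contraposition. Since $\pi((p-1)!)$ is precisely the set of primes strictly smaller than $p$ (every prime $r<p$ divides $(p-1)!$, while no prime $\geq p$ can), the asserted inclusion $\pi(G)\subseteq \pi((p-1)!)$ is equivalent to the statement that \emph{no} prime $q\geq p$ divides $|G|$. So I would instead show: if some prime $q\geq p$ divides $|G|$, then $\kappa(G)\geq p^{p-2}$, which contradicts the choice of $p$ as the smallest prime with $\kappa(G)<p^{(p-2)}$.

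Assume then that a prime $q\geq p$ divides $|G|$. The first step is to extract a well-understood subgraph: by Cauchy's theorem $G$ contains an element $x$ of order $q$, hence a cyclic subgroup $\langle x\rangle\cong \mathbb{Z}_q$. Since $\mathbb{Z}_q$ is a cyclic $q$-group, Lemma \ref{CH1} shows that $\mathcal{P}(\langle x\rangle)$ is the complete graph on $q$ vertices, and Cayley's formula gives $\kappa(\langle x\rangle)=q^{q-2}$.

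The next step is to push this value up to a lower bound on $\kappa(G)$. If $\langle x\rangle$ is a proper subgroup of $G$, then Lemma \ref{MR}, applied with the single subgroup $H_1=\langle x\rangle$, yields $\kappa(G)>\kappa(\langle x\rangle)=q^{q-2}$; if instead $G=\langle x\rangle$ then $\kappa(G)=q^{q-2}$ directly. In either case $\kappa(G)\geq q^{q-2}$. To finish, I would note that the function $t\mapsto t^{t-2}$ is increasing for $t\geq 2$, since its logarithm $(t-2)\ln t$ has derivative $\ln t+1-\tfrac{2}{t}>0$ on that range; consequently $q\geq p$ forces $q^{q-2}\geq p^{p-2}$, and therefore $\kappa(G)\geq p^{p-2}$, the desired contradiction.

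The only point requiring any care is the transfer step $\kappa(G)\geq \kappa(\langle x\rangle)$, where one must separate the degenerate case $G=\langle x\rangle$ (which gives equality) from the proper case (where Lemma \ref{MR} supplies a strict inequality); once that monotonicity of $\kappa$ under passing to a nontrivial subgroup is in hand, the remainder is a routine assembly of Cauchy's theorem, the complete-graph description of Lemma \ref{CH1} together with Cayley's count, and the elementary growth of $t^{t-2}$.
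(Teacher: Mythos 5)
This corollary is quoted in the paper from \cite{MR} without proof (it appears among the ``Previous Results''), so there is no internal proof to compare against; judged on its own, your argument is correct and is clearly the intended derivation, since the statement is presented as a corollary of Lemma \ref{MR}. Your chain --- Cauchy's theorem, Lemma \ref{CH1} plus Cayley's formula to get $\kappa(\langle x\rangle)=q^{q-2}$, Lemma \ref{MR} with $t=1$ to transfer this to $\kappa(G)$, and monotonicity of $t\mapsto t^{t-2}$ --- is exactly the natural route, and your explicit case split between $G=\langle x\rangle$ and $\langle x\rangle$ proper is a worthwhile precaution, since Lemma \ref{MR} as literally stated would yield the false inequality $\kappa(G)>\kappa(G)$ if applied with $H_1=G$.
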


\begin{Theorem}\cite{TE}\label{TE1} The number of spanning trees of a graph $\Gamma$ with $n$ vertices is
given by the formula
$$\kappa(\Gamma)=\det(\mathbf{J}+\mathbf{Q})/n^2,$$
where $\mathbf{J}$ denotes the matrix each of whose entries is $+1$.
\end{Theorem}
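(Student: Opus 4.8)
The plan is to derive this determinantal formula from the classical Matrix–Tree Theorem, which expresses $\kappa(\Gamma)$ as any cofactor of the Laplacian $\mathbf{Q}$, and then convert that single-cofactor statement into the symmetric form $\det(\mathbf{J}+\mathbf{Q})/n^2$ involving the full $n\times n$ matrix. The key structural facts I would lean on are that $\mathbf{Q}$ is symmetric with every row and column summing to zero (since $\mathbf{Q}=\mathbf{\Delta}-\mathbf{A}$ and the $i$th diagonal entry is the degree of $v_i$), so the all-ones vector $\mathbf{1}$ satisfies $\mathbf{Q}\mathbf{1}=\mathbf{0}=\mathbf{1}^{\top}\mathbf{Q}$. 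Thus $\mathbf{Q}$ has $0$ as an eigenvalue with eigenvector $\mathbf{1}$, and for a connected graph the remaining eigenvalues $\lambda_2,\dots,\lambda_n$ are positive; the Matrix–Tree Theorem gives $\kappa(\Gamma)=\tfrac{1}{n}\lambda_2\lambda_3\cdots\lambda_n$.

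First I would establish the eigenvalue picture for $\mathbf{J}+\mathbf{Q}$. Since $\mathbf{J}=\mathbf{1}\mathbf{1}^{\top}$, we have $\mathbf{J}\mathbf{1}=n\mathbf{1}$, so $\mathbf{1}$ is an eigenvector of $\mathbf{J}$ with eigenvalue $n$. Because $\mathbf{Q}\mathbf{1}=\mathbf{0}$, the vector $\mathbf{1}$ is a common eigenvector of $\mathbf{J}$ and $\mathbf{Q}$, giving $(\mathbf{J}+\mathbf{Q})\mathbf{1}=n\mathbf{1}$. For any vector $v$ orthogonal to $\mathbf{1}$, symmetry of $\mathbf{Q}$ ensures such $v$ span the orthogonal complement spanned by the remaining eigenvectors of $\mathbf{Q}$, and on that complement $\mathbf{J}v=\mathbf{1}(\mathbf{1}^{\top}v)=\mathbf{0}$, so $(\mathbf{J}+\mathbf{Q})v=\mathbf{Q}v$. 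Hence $\mathbf{J}+\mathbf{Q}$ has the same eigenvectors as $\mathbf{Q}$, with the single eigenvalue $0$ of $\mathbf{Q}$ replaced by $n$, and all other eigenvalues $\lambda_2,\dots,\lambda_n$ unchanged.

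Next I would simply compute the determinant as the product of eigenvalues. This yields
$$
\det(\mathbf{J}+\mathbf{Q})=n\cdot\lambda_2\lambda_3\cdots\lambda_n .
$$
Combining this with the Matrix–Tree identity $\kappa(\Gamma)=\tfrac{1}{n}\lambda_2\cdots\lambda_n$ gives $\lambda_2\cdots\lambda_n=n\,\kappa(\Gamma)$, and substituting back produces $\det(\mathbf{J}+\mathbf{Q})=n\cdot n\,\kappa(\Gamma)=n^2\,\kappa(\Gamma)$, which rearranges to the claimed formula $\kappa(\Gamma)=\det(\mathbf{J}+\mathbf{Q})/n^2$.

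The main obstacle is not any single hard computation but rather handling the spectral decomposition cleanly, since the orthogonal-complement argument implicitly uses that $\mathbf{Q}$ is real symmetric and therefore orthogonally diagonalizable with a basis that includes $\mathbf{1}$. I would make sure to invoke the spectral theorem explicitly so that the decomposition into the $\mathbf{1}$-direction and its orthogonal complement is rigorous, and I would state at the outset that we assume $\Gamma$ is connected so that exactly one Laplacian eigenvalue vanishes (otherwise $\kappa(\Gamma)=0$ and the formula still holds since some $\lambda_i=0$ forces $\det(\mathbf{J}+\mathbf{Q})=0$ as well, but the cleanest argument is the connected case, which is all that is needed here since every power graph is connected).
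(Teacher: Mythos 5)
Your proposal is correct, but there is nothing in the paper to compare it against: the paper states Theorem \ref{TE1} as a quoted result of Temperley \cite{TE} and supplies no proof, using it only as a computational tool (e.g.\ for $\kappa(Q_8)$ and inside Lemma \ref{DEG} and Theorem \ref{mu3}). Your spectral derivation is the standard modern one and is sound: $\mathbf{Q}\mathbf{1}=\mathbf{0}$ and $\mathbf{J}=\mathbf{1}\mathbf{1}^{\top}$ make $\mathbf{1}$ a common eigenvector, the spectral theorem splits $\mathbb{R}^n$ into $\mathrm{span}(\mathbf{1})$ and its $\mathbf{Q}$-invariant complement on which $\mathbf{J}$ vanishes, so $\det(\mathbf{J}+\mathbf{Q})=n\lambda_2\cdots\lambda_n=n^2\kappa(\Gamma)$. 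One step you cite as ``the Matrix--Tree Theorem,'' namely $\kappa(\Gamma)=\frac{1}{n}\lambda_2\cdots\lambda_n$, is really a corollary of the cofactor form of that theorem: all cofactors of $\mathbf{Q}$ equal $\kappa(\Gamma)$, and the sum of the $n$ principal $(n-1)\times(n-1)$ minors is the elementary symmetric function $e_{n-1}(\lambda_1,\dots,\lambda_n)=\lambda_2\cdots\lambda_n$ since $\lambda_1=0$; if you want the write-up self-contained you should include that one-line bridge. Your closing remark about the disconnected case is also right (a second zero eigenvalue of $\mathbf{Q}$ survives in $\mathbf{J}+\mathbf{Q}$, forcing both sides to vanish), though as you note it is not needed here because power graphs are always connected.
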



For instance, we consider the power graphs of quaternion group $$Q_8=\langle x, y \ | \ x^4=1, \  x^2=y^2, \ yx=x^{-1}y\rangle,$$ and apply Theorem \ref{TE1}, to find $\kappa(Q_8)$. The power graph  $\mathcal{P}(Q_8)$   is shown in Fig. 1. 

\vspace{1cm}
\begin{center}
\begin{tikzpicture}
\draw[fill=black] (5,3) circle (3pt);
\draw[fill=black] (5,2) circle (3pt);
\draw[fill=black] (9,3) circle (3pt);
\draw[fill=black] (9,2) circle (3pt);
\draw[fill=black] (6.2,-1) circle (3pt);
\draw[fill=black] (7.8,-1) circle (3pt);
\draw[fill=black] (7,1) circle (3pt);
\draw[fill=black] (7,2) circle (3pt);

\node at (7,2.5) {$x^2$};
\node at (7,0.5) {1};
\node at (4.5,2) {$y$};
\node at (4.5, 3) {$yx^2$};
\node at (9.5, 3) {$yx^3$};
\node at (9.5, 2) {$yx$};
\node at (5.7, -1) {$x$};
\node at (8.3, -1) {$x^3$};

\node at (7, -3) {\bf{Fig. 1. The graph  $\mathcal{P}(Q_{8})$}};

\draw[thick] (7,1)--(5,3);
\draw[thick] (7,1)--(5,2);
\draw[thick] (7,1)--(9,3);
\draw[thick] (7,1)--(9,2);
\draw[thick] (7,1)--(6.2,-1);
\draw[thick] (7,1)--(7.8,-1);
\draw[thick] (7,1)--(7, 2);
\draw[thick] (7,2)--(5,3);
\draw[thick] (7,2)--(5,2);
\draw[thick] (7,2)--(9,3);
\draw[thick] (7,2)--(9,2);
\draw[thick] (7,2)--(6.2,-1);
\draw[thick] (7,2)--(7.8,-1);
\draw[thick] (7,2)--(7, 1);
\draw[thick]  (5,3)--(5, 2);
\draw[thick] (9,3)--(9, 2);
\draw[thick](6.2,-1)--(7.8,-1);
\end{tikzpicture}
\end{center}

For this graph,  by the definition,  the adjacency matrix $\mathbf{A}$ and the diagonal matrix $\mathbf{\Delta}$ have  the following
 structures:

$$
\mathbf{A}=\left(
\begin{array}{c | c | c|c|c} 
\begin{array}{c c}
0& 1
\end{array}
 &\begin{array}{c c}
0& 0
\end{array}&
\begin{array}{c c}
0& 0
\end{array}&
1& 1\\
\begin{array}{c c}
1& 0
\end{array}
 &\begin{array}{c c}
0& 0
\end{array}&
\begin{array}{c c}
0& 0
\end{array}&
1& 1\\
\hline
\begin{array}{c c}
0& 0
\end{array}
 &\begin{array}{c c}
0& 1
\end{array}&
\begin{array}{c c}
0& 0
\end{array}&
1& 1\\
\begin{array}{c c}
0& 0
\end{array}
 &\begin{array}{c c}
1& 0
\end{array}&
\begin{array}{c c}
0& 0
\end{array}&
1& 1\\
\hline
\begin{array}{c c}
0& 0
\end{array}
 &\begin{array}{c c}
0& 0
\end{array}&
\begin{array}{c c}
0& 1
\end{array}&
1& 1\\
\begin{array}{c c}
0& 0
\end{array}
 &\begin{array}{c c}
0& 0
\end{array}&
\begin{array}{c c}
1& 0
\end{array}&
1& 1\\
\hline
\begin{array}{c c}
1& 1
\end{array}
 &\begin{array}{c c}
1& 1
\end{array}&
\begin{array}{c c}
1& 1
\end{array}&
0& 1\\
\hline
\begin{array}{c c}
1& 1
\end{array}
 &\begin{array}{c c}
1& 1
\end{array}&
\begin{array}{c c}
1& 1
\end{array}&
1& 0\\
 \end{array} 
\right), \ \ \ \ \ \ \ \& \ \ \ \ \ \ \ 
\mathbf{\Delta}=\left(
\begin{array}{c | c | c|c|c} 
\begin{array}{c c}
3& 0
\end{array}
 &\begin{array}{c c}
0& 0
\end{array}&
\begin{array}{c c}
0& 0
\end{array}&
0& 0\\
\begin{array}{c c}
0& 3
\end{array}
 &\begin{array}{c c}
0& 0
\end{array}&
\begin{array}{c c}
0& 0
\end{array}&
0& 0\\
\hline
\begin{array}{c c}
0& 0
\end{array}
 &\begin{array}{c c}
3& 0
\end{array}&
\begin{array}{c c}
0& 0
\end{array}&
0& 0\\
\begin{array}{c c}
0& 0
\end{array}
 &\begin{array}{c c}
0& 3
\end{array}&
\begin{array}{c c}
0& 0
\end{array}&
0& 0\\
\hline
\begin{array}{c c}
0& 0
\end{array}
 &\begin{array}{c c}
0& 0
\end{array}&
\begin{array}{c c}
3& 0
\end{array}&
0& 0\\
\begin{array}{c c}
0& 0
\end{array}
 &\begin{array}{c c}
0& 0
\end{array}&
\begin{array}{c c}
0& 3
\end{array}&
0& 0\\
\hline
\begin{array}{c c}
0& 0
\end{array}
 &\begin{array}{c c}
0& 0
\end{array}&
\begin{array}{c c}
0& 0
\end{array}&
7& 0\\
\hline
\begin{array}{c c}
0& 0
\end{array}
 &\begin{array}{c c}
0& 0
\end{array}&
\begin{array}{c c}
0& 0
\end{array}&
0& 7\\
 \end{array} 
\right) 
.$$
\vspace{0.2cm}

Therefore 

$$
\mathbf{J}+\mathbf{Q}=\mathbf{J}+(\mathbf{\Delta}-\mathbf{A})=\left(
\begin{array}{c | c | c|c|c} 
\begin{array}{c c}
4& 0
\end{array}
 &\begin{array}{c c}
1& 1
\end{array}&
\begin{array}{c c}
1& 1
\end{array}&
0&0\\
\begin{array}{c c}
0& 4
\end{array}
 &\begin{array}{c c}
1& 1
\end{array}&
\begin{array}{c c}
1&1
\end{array}&
0& 0\\
\hline
\begin{array}{c c}
1&1
\end{array}
 &\begin{array}{c c}
4& 0
\end{array}&
\begin{array}{c c}
1& 1
\end{array}&
0& 0\\
\begin{array}{c c}
1& 1
\end{array}
 &\begin{array}{c c}
0& 4
\end{array}&
\begin{array}{c c}
1& 1
\end{array}&
0& 0\\
\hline
\begin{array}{c c}
1&1
\end{array}
 &\begin{array}{c c}
1&1
\end{array}&
\begin{array}{c c}
4& 0
\end{array}&
0& 0\\
\begin{array}{c c}
1& 1
\end{array}
 &\begin{array}{c c}
1& 1
\end{array}&
\begin{array}{c c}
0& 4
\end{array}&
0& 0\\
\hline
\begin{array}{c c}
0& 0
\end{array}
 &\begin{array}{c c}
0& 0
\end{array}&
\begin{array}{c c}
0& 0
\end{array}&
8& 0\\
\hline
\begin{array}{c c}
0& 0
\end{array}
 &\begin{array}{c c}
0& 0
\end{array}&
\begin{array}{c c}
0& 0
\end{array}&
0& 8\\
 \end{array} 
\right) 
,$$

\vspace{0.3cm}

So by Theorem \ref{TE1} and easy calculation, $\kappa(Q_8)=\frac{\det(\mathbf{J}+\mathbf{Q})}{8^2}= 2^{11}$, as we expected by the following Theorem. 

\begin{Theorem}\cite{MR}\label{MR5}
If $n$ is a power of $2$, then the tree-number of the power graph $\mathcal{P}(Q_{4n})$  is given by
the formula $\kappa(Q_{4n})=2^{5n-1}\cdot n^{2n-2}$.
\end{Theorem}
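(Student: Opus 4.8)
The plan is to pin down the combinatorial structure of $\mathcal{P}(Q_{4n})$ completely and then feed the resulting block matrix into Theorem \ref{TE1}. Write $Q_{4n}=\langle x,y \mid x^{2n}=1,\ x^n=y^2,\ yx=x^{-1}y\rangle$, so that $\langle x\rangle$ is cyclic of order $2n$ and $x^n$ is the unique involution. Because $n$ is a power of $2$, the order $2n$ is a prime power, so by Lemma \ref{CH1} the subgraph induced on $\langle x\rangle$ is the complete graph $K_{2n}$; moreover $\langle x^n\rangle$, being the unique minimal subgroup, lies in every nontrivial cyclic subgroup, so both $1$ and $x^n$ are adjacent to every other vertex. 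The remaining $2n$ vertices are the $y$-type elements $yx^i$, and a short computation with the relations gives $\langle yx^i\rangle=\{1,yx^i,x^n,yx^{i+n}\}$, cyclic of order $4$. Since no $y$-type element lies in $\langle x\rangle$ and every cyclic subgroup of order exceeding $4$ is contained in $\langle x\rangle$, each $yx^i$ is adjacent only to $1$, $x^n$, and its partner $yx^{i+n}$. This is exactly where the hypothesis that $n$ is a power of $2$ enters: it forces $\langle x\rangle$ to be a clique and $x^n$ to be universal.

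I would record this as the structural identity $\mathcal{P}(Q_{4n})=K_2\vee\big(K_{2n-2}\oplus(n\,K_2)\big)$, where the $K_2$ is the pair $\{1,x^n\}$, the $K_{2n-2}$ is the clique on $\langle x\rangle\setminus\{1,x^n\}$, and the $n$ copies of $K_2$ are the pairs $\{yx^i,yx^{i+n}\}$. The vertex degrees are then $4n-1$ for $1$ and $x^n$, $2n-1$ for the other powers of $x$, and $3$ for each $y$-type element.

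Next I would assemble $\mathbf{J}+\mathbf{Q}$, whose off-diagonal entries equal $1$ precisely for non-adjacent pairs and whose diagonal entries are $1+\deg$. Ordering the vertices as the $2n$ $y$-type elements (with partners placed consecutively), then the $2n-2$ nontrivial non-central powers of $x$, then $1$ and $x^n$, the two universal vertices decouple and contribute a diagonal block $4n\,\mathbf{I}_2$; the surviving $(4n-2)\times(4n-2)$ block is $\left(\begin{smallmatrix} 3\mathbf{I}+\mathbf{J}-P & \mathbf{J} \\ \mathbf{J} & 2n\,\mathbf{I}\end{smallmatrix}\right)$, where $P$ is the permutation matrix of the matching on the $y$-type pairs. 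A Schur-complement reduction against the scalar block $2n\,\mathbf{I}$ collapses the determinant of this block to $(2n)^{2n-2}\det\!\big(3\mathbf{I}-P+\tfrac1n\mathbf{J}\big)$.

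The one computation needing care — and the step I expect to be the crux — is this last determinant. Since $P$ and $\mathbf{J}$ commute (both fix the all-ones vector), they are simultaneously diagonalizable: the symmetric-within-pairs vectors give eigenvalue $2$ of $3\mathbf{I}-P$ and the antisymmetric ones give eigenvalue $4$, while $\tfrac1n\mathbf{J}$ shifts only the all-ones direction, turning its eigenvalue from $2$ to $4$. This yields eigenvalue $2$ with multiplicity $n-1$ and eigenvalue $4$ with multiplicity $n+1$, so the determinant is $4^{\,n+1}2^{\,n-1}=2^{3n+1}$. Restoring the factor $\det(2n\,\mathbf{I}_{2n-2})=(2n)^{2n-2}$ and the decoupled block $\det(4n\,\mathbf{I}_2)=(4n)^2$, and dividing by $(4n)^2$ as dictated by Theorem \ref{TE1}, gives $\kappa(Q_{4n})=(2n)^{2n-2}\,2^{3n+1}=2^{5n-1}\,n^{2n-2}$. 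As a sanity check, $n=2$ returns $2^{11}=\kappa(Q_8)$, matching the worked example above.
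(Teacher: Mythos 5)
Your proposal is correct, and it is worth noting that the paper itself does not prove this statement at all: Theorem \ref{MR5} is quoted from \cite{MR}, and the only computation in the paper is the hand verification of the single case $Q_8$ (i.e.\ $n=2$) by writing out the $8\times 8$ matrix $\mathbf{J}+\mathbf{Q}$ and invoking Theorem \ref{TE1}. What you have done is turn that isolated example into a proof for every $n$ a power of $2$, using the same engine (Temperley's formula) but replacing brute force with structure: the decomposition $\mathcal{P}(Q_{4n})=K_2\vee\bigl(K_{2n-2}\oplus n\,K_2\bigr)$ is right (the key facts --- $x^n$ is the unique involution, every element outside $\langle x\rangle$ has order $4$ with $\langle yx^i\rangle=\{1,yx^i,x^n,yx^{i+n}\}$, and $\langle x\rangle$ is a clique because $2n$ is a prime power --- all check out), the two universal vertices do decouple into the block $4n\,\mathbf{I}_2$, the Schur complement against $2n\,\mathbf{I}_{2n-2}$ correctly yields $3\mathbf{I}-P+\tfrac1n\mathbf{J}$, and the simultaneous diagonalization (all-ones vector: eigenvalue $4$; symmetric-within-pairs vectors orthogonal to it: eigenvalue $2$, multiplicity $n-1$; antisymmetric vectors: eigenvalue $4$, multiplicity $n$) gives $\det=2^{3n+1}$, whence $\kappa(Q_{4n})=(2n)^{2n-2}\,2^{3n+1}=2^{5n-1}n^{2n-2}$. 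Compared with whatever route \cite{MR} takes, your argument has the advantage of being self-contained within the tools this paper already quotes (Lemma \ref{CH1} and Theorem \ref{TE1}), and it subsumes the paper's $Q_8$ computation as the case $n=2$; its one mild compression is the claim that every cyclic subgroup of order exceeding $4$ lies in $\langle x\rangle$, which you should justify in one line by observing that every element outside $\langle x\rangle$ squares to $x^n$ and hence has order $4$.
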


We conclude this section with two results which are used for our final main theorem (Theorem \ref{A6}). 

\begin{Theorem} \label{mnn}\cite{CM} Let $q = p^n$, with $p$ prime and $n\in \mathbb{N}$, let $G = L_2(q)$. Then
we have:
$$ \kappa(G) = p^{\frac{(q^2-1)(p-2)}{p-1}}\cdot \kappa(\mathbb{Z}_{\frac{q-1}{k}})^{q(q+1)/2}\cdot \kappa(\mathbb{Z}_{\frac{q+1}{k}})^{q(q-1)/2},$$
where $k ={\rm  gcd}(q-1, 2)$, except exactly in the cases $(p, n) = (2, 1), (3, 1)$. In
particular, we have
\begin{item}
\item $A_5\cong  L_2(5)\cong  L_2(4)$ and $\kappa(A_5) = 3^{10}\cdot 5^{18}$ (see \cite{MR})

\item $ L_3(2)\cong L_2(7) $ and $\kappa(L_3(2)) = 2^{84}\cdot3^{28}
\cdot7^{40}$

\item $A_6\cong L_2(9)$ and $\kappa(A_6) = 2^{180}\cdot3^{40}\cdot5^{108}$.
\end{item}
\end{Theorem}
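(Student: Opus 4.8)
The final statement (Theorem \ref{A6}) asserts that $h_{\mathcal{S}}(A_6) = 1$, i.e.\ $A_6 \cong L_2(9)$ is the unique finite simple group $H$ satisfying $\kappa(H) = \kappa(A_6) = 2^{180}\cdot 3^{40}\cdot 5^{108}$. The plan is to suppose $H \in \mathcal{S}$ with $\kappa(H) = 2^{180}\cdot 3^{40}\cdot 5^{108}$ and force $H \cong A_6$ by reconciling the arithmetic of $\kappa(H)$ with the classification of finite simple groups. The overall strategy is to first pin down $\pi(H)$ tightly, then run through the (now short) list of admissible simple groups and compute or estimate $\kappa$ for each, eliminating all but $A_6$.

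First I would bound the primes dividing $|H|$. The key tool is Corollary \ref{MR4}: if $p$ is the smallest prime with $\kappa(H) < p^{\,p-2}$, then $\pi(H) \subseteq \pi((p-1)!)$. Since $\kappa(A_6) = 2^{180}\cdot 3^{40}\cdot 5^{108}$ is a fixed integer, I would locate the smallest prime $p$ for which $p^{\,p-2}$ exceeds this value; a direct size comparison (comparing $\log \kappa(A_6)$ against $(p-2)\log p$) shows this happens at a small prime, yielding $\pi(H) \subseteq \pi((p-1)!)$ and hence a short explicit set of candidate primes, in fact forcing $\pi(H) \subseteq \{2,3,5,7\}$ or a similarly small set. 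Combined with the fact that $H$ is simple and $\pi(H)$ must consist only of small primes, the classification of finite simple groups restricts $H$ to a finite, short list: the alternating groups $A_n$ for small $n$, and the groups of Lie type and sporadic groups whose order involves only these primes (for example $A_5$, $A_6$, $A_7$, $L_2(q)$ for small $q$, $L_3(2)$, $U_3(3)$, $U_4(2)$, and a handful of others).

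Next, for each surviving candidate $H$ I would extract divisibility or exact-value constraints on $\kappa(H)$ and compare with $2^{180}\cdot 3^{40}\cdot 5^{108}$. Lemma \ref{MR} gives the lower bound $\kappa(H) > \prod_i \kappa(H_i)$ whenever $H_1,\dots,H_t$ are nontrivial subgroups pairwise intersecting trivially; applied to the Sylow or cyclic subgroups of $H$, this both bounds $\kappa(H)$ from below (eliminating groups that are too large) and, via the exponents contributed by cyclic $p$-subgroups of prime-power order through $\kappa(\mathbb{Z}_{p^m}) = (p^m)^{p^m-2}$, forces strong $p$-adic divisibility conditions on $\kappa(H)$. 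In particular the exact exponent of each prime in $\kappa(A_6)$ is rigid, so any candidate whose cyclic-subgroup structure makes the power of $2$, $3$, or $5$ too large (or introduces a prime $7$) is immediately excluded. For candidates of the form $L_2(q)$ I would invoke Theorem \ref{mnn} directly to compute $\kappa(L_2(q))$ in closed form and check that only $q = 9$ returns $2^{180}\cdot 3^{40}\cdot 5^{108}$; the displayed special cases in that theorem already record $\kappa(A_5)$, $\kappa(L_3(2))$, and $\kappa(A_6)$, which dispatch the three nearest competitors at once.

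The main obstacle will be the case analysis in the last step: after the prime restriction there remain several simple groups whose orders use only $\{2,3,5\}$ or $\{2,3,5,7\}$, and for those not covered by the $L_2(q)$ formula (such as certain alternating, unitary, or symplectic groups) I cannot read off $\kappa$ from an existing closed form. For these I would rely on coarse but decisive estimates: Lemma \ref{MR} forces $\kappa(H)$ to be enormous when $H$ contains many pairwise-trivially-intersecting large cyclic subgroups, so groups strictly larger than $A_6$ overshoot $2^{180}\cdot 3^{40}\cdot 5^{108}$, while groups smaller than $A_6$ with the right prime set are few and can be ruled out by exact exponent mismatches (most cleanly, by a prime $7$ appearing in $|H|$ but with the wrong resulting power in $\kappa(H)$, or a prime simply being absent from $\kappa(A_6)$). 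Assembling these eliminations leaves $H \cong A_6 \cong L_2(9)$ as the only possibility, giving $h_{\mathcal{S}}(A_6) = 1$.
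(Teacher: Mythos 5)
You have proved the wrong statement. The theorem at issue is Theorem \ref{mnn}: the closed-form factorization $\kappa(L_2(q)) = p^{(q^2-1)(p-2)/(p-1)}\cdot \kappa(\mathbb{Z}_{(q-1)/k})^{q(q+1)/2}\cdot \kappa(\mathbb{Z}_{(q+1)/k})^{q(q-1)/2}$ with $k=\gcd(q-1,2)$, together with its three numerical instances. Your proposal instead sketches a proof of Theorem \ref{A6}, the $\kappa$-recognizability of $A_6$ among finite simple groups, and --- fatally for the assigned task --- it invokes Theorem \ref{mnn} itself as a tool (``for candidates of the form $L_2(q)$ I would invoke Theorem \ref{mnn} directly''), which is circular: the statement cannot be used to prove itself. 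For the record, the paper also gives no internal proof of Theorem \ref{mnn}; it is imported from \cite{CM}, so there is nothing in the manuscript to compare against, but your attempt does not supply the missing argument either. (Even read as a proof of Theorem \ref{A6}, your sketch is looser than the paper's: the paper pins down $\pi(G)\subseteq\{2,3,5,7,11\}$ via Lemma \ref{MR1} and Lemma \ref{MR}, removes $7$ and $11$ from $\mu(G)$ by Lemma \ref{B2}, then appeals to Zavarnitsine's classification \cite{zav} and eliminates $A_5$ by Theorem \ref{AA5} and $S_4(7)$ by Theorem \ref{mu3}, rather than estimating $\kappa$ case by case as you propose.)

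An actual proof of Theorem \ref{mnn} runs along lines already available in the paper's toolkit. By Dickson's classical subgroup structure, the nonidentity elements of $G=L_2(q)$ are partitioned by the members of a group partition: $q+1$ Sylow $p$-subgroups, elementary abelian of order $q$, pairwise intersecting trivially, together with $q(q+1)/2$ maximal cyclic subgroups of order $(q-1)/k$ and $q(q-1)/2$ of order $(q+1)/k$, again pairwise trivially intersecting; the excluded cases $(p,n)=(2,1),(3,1)$ are exactly where this partition degenerates. Since $x\sim y$ in $\mathcal{P}(G)$ forces $\langle x\rangle$ and $\langle y\rangle$ to be comparable, no path in $\mathcal{P}^*(G)$ can leave a member of the partition, so the connected components of $\mathcal{P}^*(G)$ are: one clique $K_{p-1}$ for each of the $(q+1)(q-1)/(p-1)=(q^2-1)/(p-1)$ cyclic subgroups of order $p$ (each elementary abelian Sylow subgroup carrying $(q-1)/(p-1)$ of them, cf.\ Lemma \ref{B1}), and one copy of $\mathcal{P}^*(\mathbb{Z}_{(q\mp 1)/k})$ for each torus. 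The identity is a cut vertex, so Theorem \ref{MR1} factors $\kappa(G)$ over the components; Cayley's formula $\kappa(K_p)=p^{p-2}$ converts the order-$p$ components into the factor $p^{(p-2)(q^2-1)/(p-1)}$, and the tori contribute $\kappa(\mathbb{Z}_{(q-1)/k})^{q(q+1)/2}\cdot\kappa(\mathbb{Z}_{(q+1)/k})^{q(q-1)/2}$. The values for $A_5\cong L_2(4)\cong L_2(5)$, $L_3(2)\cong L_2(7)$, and $A_6\cong L_2(9)$ then follow by direct evaluation. None of these steps --- the partition, the component analysis, or the cut-vertex factorization --- appears in your proposal.
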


\begin{Lemma}\cite{MR}\label{MR1} Let $G$ be a finite nonabelian simple group and let
$p$ be a prime dividing the order of $G$. Then $G$ has at least $p^2-1$ elements of
order $p$, or equivalently, there is at least $p+1$ cyclic subgroups of order $p$ in $G$.
\end{Lemma}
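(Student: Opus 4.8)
The plan is to count the cyclic subgroups of order $p$ through a single congruence. Write $m = c_p(G)$ for the number of cyclic subgroups of order $p$. Since $p$ is prime, each such subgroup has exactly $p-1$ generators, all of order $p$, and two distinct subgroups of order $p$ meet only in the identity (their intersection is a subgroup of a group of prime order); hence every element of order $p$ generates a unique such subgroup and the number of elements of order $p$ in $G$ equals $(p-1)m$. This already shows the two formulations are equivalent, so it suffices to prove $m \geq p+1$: that yields at least $(p-1)(p+1) = p^2-1$ elements of order $p$. I would establish this in two steps, first showing $m \equiv 1 \pmod p$ and then excluding $m = 1$ by simplicity.

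For the congruence I would invoke the classical mod-$p$ count of solutions of $x^p = 1$ (the case $n=p$ of Frobenius's theorem). A quick self-contained route is McKay's orbit argument: let $\mathbb{Z}_p$ act by cyclic rotation on the set $X = \{(x_1,\dots,x_p) : x_i \in G,\ x_1 \cdots x_p = 1\}$, which is well defined and has $|X| = |G|^{p-1}$. Since $p \mid |G|$ we get $|X| \equiv 0 \pmod p$, while the fixed points are exactly the constant tuples $(x,\dots,x)$ with $x^p = 1$; counting orbits of size $1$ or $p$ gives that the number of $x$ with $x^p = 1$ is $\equiv |X| \equiv 0 \pmod p$. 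As $\{x : x^p = 1\}$ consists of the identity together with all elements of order $p$, the number of order-$p$ elements is $\equiv -1 \pmod p$. Combining with the count $(p-1)m$ from the first paragraph gives $(p-1)m \equiv -1 \pmod p$, i.e. $-m \equiv -1$, whence $m \equiv 1 \pmod p$.

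It then remains to rule out $m = 1$. If $G$ had a unique subgroup $Z$ of order $p$, then every automorphism of $G$ would fix $Z$ (it permutes the order-$p$ subgroups, of which there is only one), in particular every inner automorphism, so $Z \trianglelefteq G$. Since $G$ is nonabelian simple it is not cyclic, so $Z \neq G$, and clearly $Z \neq \{1\}$; this gives a proper nontrivial normal subgroup, contradicting simplicity. Hence $m \geq 2$, and together with $m \equiv 1 \pmod p$ this forces $m \geq p+1$, which completes the argument. The only genuine ingredient is the mod-$p$ count of solutions of $x^p = 1$; the rest is bookkeeping, with simplicity entering solely to discard the degenerate case of a single, necessarily normal, subgroup of order $p$ (as happens for the abelian simple group $\mathbb{Z}_p$, explaining why the nonabelian hypothesis is needed).
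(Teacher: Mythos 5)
Your proof is correct and matches the approach behind the paper's source: the lemma is quoted here without proof from \cite{MR}, where it is obtained exactly as you do, via the Frobenius congruence (the number of solutions of $x^p=1$ is divisible by $p$, hence $c_p\equiv 1\pmod{p}$) combined with simplicity to exclude a unique, and therefore normal, subgroup of order $p$. Your McKay-style orbit count merely makes the Frobenius ingredient self-contained (the paper even carries Frobenius's article \cite{Fro} in its bibliography for exactly this point), so this is the same argument rather than a genuinely different route.
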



	\section{Main parts of manuscrips}


As we mentioned before,  by the  definition
of the power graph of any group, the identity element of the group
is adjacent to all other vertices, so the graph is always connected. In this section, first, we look deeper to the power graph associated with a finite group and prove some necessarily lemmas, and then  we are going to find some useful divisors of the $\kappa(G)$, for a finite group $G$.

\begin{Lemma}\label{B1} Let $G$ be a $p$-group, where $p$ is a prime. Then $\mathcal{P}^*(G)$ has exactly $c_p$ connected
component, where $c_p$ is the number of distinct cyclic subgroups of order $p$ of $G$.
\end{Lemma}
\begin{proof} If $\mathcal{P}^*(G)$ is connected, then by Lemma \ref{MRS1}, $G$ has a unique minimal subgroup ($c_p=1$), and so there is nothing to be proved. Therefore we assume that $\mathcal{P}^*(G)$ be disconnected. Obviously, by the definition of $\mathcal{P}(G)$, 
in every connected component of  $\mathcal{P}^*(G)$, there must be at least one cyclic subgroup of order $p$. Assume that there are two distinct subgroups $\langle x \rangle$ and $\langle y \rangle$ in some connected components of  $\mathcal{P}^*(G)$. Let the following path, be a shortest path from $x$ to $y$,
$$x=x_0\sim x_1\sim x_2\sim \dots \sim x_n=y.$$ 
Certainly, $n \geq  2$ and $x \nsim x_i$, for each $i = 2, 3, . . ., n$.
Since $x\sim  x_1$, by the definition, we have $ x\in \langle x_1 \rangle$ or $ x_1\in \langle x \rangle$.
 We only consider the
first case, and the second one goes similarly. Since $x_1 \sim x_2$, it follows that $ x_1\in \langle x_2 \rangle$
or $ x_2\in \langle x_1 \rangle$. If $ x_1\in \langle x_2 \rangle$, then $ x\in \langle x_2 \rangle$
 which is a contradiction. Therefore, $ x_2\in \langle x_1 \rangle$
 But then $x, x_2\in \langle x_1\rangle$ and since $\langle x_1 \rangle$ is a $p$-group, Lemma \ref{MRS2} shows
that $\langle x_1 \rangle\setminus \{1\}$ is a clique in $\mathcal{P}^*(G)$. Hence $x \sim x_2$, a contradiction again. This
completes the proof. 
\end{proof}

An immediate consequence of  Theorem \ref{MR1} and Lemma \ref{B1} is the
following:

\begin{Corollary}\label{B3} Let $G$ be a $p$-group, for some prime numbers $p$, and $H_i$ be the ith connected component of 
$\mathcal{P}^*(G)$. 
Then  
$$\kappa(G)= \kappa(\tilde{H_1})\times\kappa(\tilde{H_2})\times\cdots\times \kappa(\tilde{H_{c_p}}),$$
where  $\tilde{H_i}=H_i+1$, for $i = 1, 2, \dots c_p$.
\end{Corollary}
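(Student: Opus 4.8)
The plan is to derive Corollary \ref{B3} directly from Lemma \ref{B1} together with the cut-vertex factorization in Theorem \ref{MR1}, using the identity element as the cut vertex. First I would observe that the identity $1$ is adjacent to every other vertex of $\mathcal{P}(G)$, so removing it yields $\mathcal{P}(G) - 1 = \mathcal{P}^*(G)$. By Lemma \ref{B1}, since $G$ is a $p$-group, the graph $\mathcal{P}^*(G)$ decomposes into exactly $c_p$ connected components, say $H_1, H_2, \dots, H_{c_p}$. Provided there is more than one component (i.e.\ $c_p \geq 2$), the vertex $1$ is a genuine cut vertex of $\mathcal{P}(G)$, and its deletion produces the component decomposition
$$\mathcal{P}(G) - 1 = H_1 \oplus H_2 \oplus \cdots \oplus H_{c_p}.$$

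Next I would apply Theorem \ref{MR1} with $\Gamma = \mathcal{P}(G)$, $v = 1$, and $c = c(\mathcal{P}(G) - 1) = c_p$. That theorem gives
$$\kappa(\mathcal{P}(G)) = \kappa(\tilde{H_1}) \times \kappa(\tilde{H_2}) \times \cdots \times \kappa(\tilde{H_{c_p}}),$$
where $\tilde{H_i} = H_i + 1$ is the subgraph obtained by adding the vertex $1$ back to the component $H_i$ together with the edges joining $1$ to each vertex of $H_i$. Since $\kappa(G)$ is by definition $\kappa(\mathcal{P}(G))$, this is exactly the asserted formula.

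I would then dispose of the boundary case $c_p = 1$ separately: here $\mathcal{P}^*(G)$ is connected by Lemma \ref{B1}, the identity is not a cut vertex, and the product on the right collapses to the single factor $\kappa(\tilde{H_1}) = \kappa(\mathcal{P}(G)) = \kappa(G)$, so the formula holds trivially. The main thing to verify carefully is that the edges incident to $1$ are correctly accounted for, namely that each $\tilde{H_i}$ recovers exactly the induced subgraph of $\mathcal{P}(G)$ on $V(H_i) \cup \{1\}$; this follows because every edge of $\mathcal{P}(G)$ either lies within some single component $H_i$ or is incident to $1$ (no edge can join two distinct components of $\mathcal{P}^*(G)$ by definition of connected component). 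I do not expect any genuine obstacle here, since this corollary is a direct specialization of Theorem \ref{MR1} once Lemma \ref{B1} supplies the component count; the only point requiring a line of justification is the identification $\Gamma - v = \mathcal{P}^*(G)$ and the resulting equality $c = c_p$.
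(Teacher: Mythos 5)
Your proposal is correct and follows exactly the route the paper intends: the paper states this corollary as an immediate consequence of Theorem \ref{MR1} (the cut-vertex factorization) and Lemma \ref{B1}, which is precisely your argument of deleting the full-degree identity vertex, identifying $\mathcal{P}(G)-1$ with $\mathcal{P}^*(G)$, and matching the component count $c=c_p$. Your additional care with the $c_p=1$ case and the edge accounting for each $\tilde{H_i}$ is sound but only fills in details the paper leaves implicit.
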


Clearly if $H$ is a subgroup of $G$, then $\mathcal{P}^*(H)$ is a subgraph of $\mathcal{P}^*(G)$. However, the converse
is not true, even if we consider the vertices of a connected components of $\mathcal{P}^*(G)$ in union with $\{1\}$. For instance,  by the subgraph  $\mathcal{P}^*(\mathbb{Z}_2\times\mathbb{Z}_4)$ (see Fig. 2.), there is no subgroup $H$ of $G$, in which, $\mathcal{P}^*(H)$ be the connected component with $5$ vertices.

\vspace{1cm}
\begin{center}
\begin{tikzpicture}
\draw[fill=black] (6,4) circle (3pt);
\draw[fill=black] (5,3) circle (3pt);
\draw[fill=black] (8,4) circle (3pt);
\draw[fill=black] (9,3) circle (3pt);
\draw[fill=black] (7,2) circle (3pt);
\draw[fill=black] (6,0) circle (3pt);
\draw[fill=black] (8,0) circle (3pt);

\node at (7, -1) {\bf{Fig. 2. The subgraph  $\mathcal{P}^*(\mathbb{Z}_2\times\mathbb{Z}_4)$}};

\draw[thick] (5,3)--(6,4);
\draw[thick] (9,3)--(8,4);
\draw[thick] (7,2)--(9,3);
\draw[thick] (7,2)--(8,4);
\draw[thick] (7,2)--(6,4);
\draw[thick] (7,2)--(5,3);
\draw[thick] (7,2)--(9,3);
\end{tikzpicture}
\end{center}

 At the following, we show that in a particular situation, the converse could be hold. 

\begin{Lemma}\label{B6} Let $G$ be a group and  $\Omega$ be a connected component of $\mathcal{P}^*(G)$
which is a clique. Then there is a cyclic $p$-subgroup $H$ of $G$ with $p\in \pi(G)$ that $\mathcal{P}^*(H)=\Omega $. 
\end{Lemma}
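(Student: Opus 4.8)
The plan is to produce the subgroup $H$ explicitly as the cyclic group generated by an element of largest order in $\Omega$, and then to recognize it as a $p$-group via the completeness criterion of Lemma \ref{CH1}. The target equality $\mathcal{P}^*(H)=\Omega$ will be obtained first at the level of vertex sets and then upgraded to a graph isomorphism.

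First I would exploit the clique hypothesis to impose a total order on $\Omega$. For any two vertices $x,y\in\Omega$ the edge $x\sim y$ means, by definition of the power graph, that $\langle x\rangle\subseteq\langle y\rangle$ or $\langle y\rangle\subseteq\langle x\rangle$. Thus the finite family $\{\langle x\rangle : x\in\Omega\}$ is totally ordered by inclusion, hence is a chain and has a unique maximal member, say $\langle g\rangle$ with $g\in\Omega$. By maximality $\langle x\rangle\subseteq\langle g\rangle$ for every $x\in\Omega$, so that $\Omega\subseteq\langle g\rangle\setminus\{1\}$.

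Next I would establish the reverse inclusion, and here the fact that $\Omega$ is an entire connected component (not merely some clique) is essential. Take any $t\in\langle g\rangle\setminus\{1\}$. Then $\langle t\rangle\subseteq\langle g\rangle$, so $t\sim g$ in $\mathcal{P}^*(G)$; since $g\in\Omega$ and $\Omega$ is a connected component, $t$ must also lie in $\Omega$. Hence $\langle g\rangle\setminus\{1\}\subseteq\Omega$, and combining the two inclusions yields the vertex equality $\Omega=\langle g\rangle\setminus\{1\}$. I then set $H:=\langle g\rangle$, a cyclic subgroup of $G$.

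Finally I would upgrade this vertex equality to a graph equality and read off the prime-power structure. By the subgroup property (Lemma \ref{MRS2}), $\mathcal{P}(H)$ is the induced subgraph of $\mathcal{P}(G)$ on $H$; consequently $\mathcal{P}^*(H)$ is the induced subgraph on $H\setminus\{1\}$, and since $\Omega$ is itself the induced subgraph of $\mathcal{P}^*(G)$ on that same vertex set, we obtain $\mathcal{P}^*(H)=\Omega$ as graphs. But $\Omega$ is a clique, so $\mathcal{P}(H)$ is complete; as $H\neq\{1\}$, Lemma \ref{CH1} forces $H$ to be cyclic of order $p^m$ for some prime $p\in\pi(G)$, and $H$ is the desired cyclic $p$-subgroup. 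I do not anticipate a genuine obstacle; the only point requiring care is the reverse inclusion, where one must use that $\Omega$ is a \emph{maximal} connected piece to guarantee that no nonidentity power of $g$ escapes $\Omega$. The completeness of $\mathcal{P}(H)$, and hence that $H$ is a $p$-group, then comes for free as a consequence rather than as an input.
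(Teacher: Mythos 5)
Your proof is correct, and it follows the same basic strategy as the paper --- pick a maximal element of the clique $\Omega$ and show that $\Omega$ consists exactly of the nonidentity elements of the cyclic subgroup it generates --- but the two arguments differ in how they extract the prime-power structure, and yours is tighter in one respect. The paper first argues (from the observation that elements of coprime order are never adjacent) that every vertex of $\Omega$ has order a power of one fixed prime $p$, then chooses $\beta\in\Omega$ of largest $p$-power order and shows $\Omega\subseteq\mathcal{P}^*(\langle\beta\rangle)$ by the dichotomy $\alpha\in\langle\beta\rangle$ or $\beta\in\langle\alpha\rangle$. You instead bypass the coprimality argument entirely: the clique condition makes $\{\langle x\rangle : x\in\Omega\}$ a chain under inclusion, its maximum $\langle g\rangle$ gives one inclusion, and the fact that $H=\langle g\rangle$ is a $p$-group comes out \emph{at the end}, for free, from Lemma \ref{CH1} applied to the complete graph $\mathcal{P}(H)$. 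This is a cleaner division of labor, since the number-theoretic content is delegated to an already-cited lemma. You also explicitly prove the reverse inclusion $\langle g\rangle\setminus\{1\}\subseteq\Omega$ (every nonidentity power of $g$ is adjacent to $g$, hence lies in the same component), a step the paper's proof asserts only implicitly --- it verifies $\Omega\subseteq\mathcal{P}^*(\langle\beta\rangle)$ and then claims equality. One cosmetic caveat: Lemma \ref{MRS2} as stated only says $\mathcal{P}(H)$ is a \emph{subgraph} of $\mathcal{P}(G)$; the fact that it is the \emph{induced} subgraph (which your final graph-equality step needs) follows not from that citation but directly from the definition of the power graph, since the condition $\langle x\rangle\subseteq\langle y\rangle$ does not depend on the ambient group.
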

\begin{proof} By the definition of power graph, if $(o(g_1), o(g_2))=1$, for $g_1, g_2 \in G$, then $g_1 \nsim g_2$ in 
$\mathcal{P}(G)$. Therefore, since $\Omega$ is a clique,  for every element $\alpha$ in $V(\Omega)$, we must have $\pi(o(\alpha))=p$, for a prime number 
$p$. Let $\beta\in V(\Omega)$ be the vertex which its order has the largest power of the prime p. We claime that $\Omega=\mathcal{P}^*(\langle \beta \rangle)$. For every element $\alpha$ in $V(\Omega)$, 
 if $\alpha \in \langle \beta \rangle$, then there is nothing to be proved. Let $\beta \in \langle \alpha \rangle$, but then $o(\beta) \ | \ o(\alpha)$, which implies that $o(\alpha) = o(\beta)$, because $o(\beta)$ is the largest power of the prime p, and so $\langle \alpha \rangle= \langle \beta \rangle$. This proves our claim. 
\end{proof}

\begin{Corollary}\label{B9} Let $G$ be a $p$-group, for prime number $p$. If all connected components of  $\mathcal{P}^*(G)$
are clique, then $\pi(\kappa(G))=\{p\}$.
\end{Corollary}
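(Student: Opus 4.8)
The plan is to exploit the factorization of $\kappa(G)$ across the connected components of $\mathcal{P}^*(G)$ provided by Corollary \ref{B3}, and then to show that each factor is a power of $p$. Since $G$ is a $p$-group, Corollary \ref{B3} gives
$$\kappa(G) = \kappa(\tilde{H_1}) \times \kappa(\tilde{H_2}) \times \cdots \times \kappa(\tilde{H_{c_p}}),$$
where $H_1, \dots, H_{c_p}$ are the connected components of $\mathcal{P}^*(G)$ and $\tilde{H_i} = H_i + 1$. Thus it suffices to prove that each $\kappa(\tilde{H_i})$ is a power of $p$; the product then inherits the same property.

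First I would analyze a single component. By hypothesis every $H_i$ is a clique, so Lemma \ref{B6} applies and yields a cyclic $p$-subgroup $K_i \leqslant G$ with $\mathcal{P}^*(K_i) = H_i$. Restoring the identity vertex, the vertex set of $\tilde{H_i}$ is exactly $K_i$, the identity being adjacent to all of $K_i \setminus \{1\}$, so $\tilde{H_i} = \mathcal{P}(K_i)$. As $K_i$ is a cyclic $p$-group, Lemma \ref{CH1} shows that $\mathcal{P}(K_i)$ is complete; writing $|K_i| = p^{m_i}$, it is the complete graph on $p^{m_i}$ vertices. Cayley's formula \cite{Ca} (the complete graph on $n$ vertices has $n^{n-2}$ spanning trees) then gives $\kappa(\tilde{H_i}) = (p^{m_i})^{p^{m_i}-2} = p^{m_i(p^{m_i}-2)}$, a power of $p$.

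Combining these, $\kappa(G) = p^{\sum_i m_i(p^{m_i}-2)}$, whence $\pi(\kappa(G)) \subseteq \{p\}$. The only point requiring care — and the one I expect to be the genuine obstacle, rather than the routine computation above — is the passage from the inclusion $\pi(\kappa(G)) \subseteq \{p\}$ to the claimed equality $\pi(\kappa(G)) = \{p\}$. Equality fails precisely when every exponent $m_i(p^{m_i}-2)$ vanishes, that is, when $p^{m_i} = 2$ for all $i$; this is exactly the case of an elementary abelian $2$-group, for which $\kappa(G) = 1$ and $\pi(\kappa(G)) = \varnothing$. I would therefore either record this as the sole exception or read the statement under the tacit hypothesis that $G$ is not an elementary abelian $2$-group, in which case at least one factor $p^{m_i(p^{m_i}-2)}$ is a nontrivial power of $p$ and the equality $\pi(\kappa(G)) = \{p\}$ holds.
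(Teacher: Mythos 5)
Your proof is correct and takes essentially the same route as the paper, whose entire proof reads ``straightforward, by Lemma \ref{B6} and Theorem \ref{MR1}'' --- that is, exactly your combination of the component decomposition (Corollary \ref{B3}), the identification of each clique component with $\mathcal{P}^*$ of a cyclic $p$-subgroup (Lemma \ref{B6}), and Cayley's formula for the resulting complete graphs. Your closing observation is a genuine point the paper glosses over: for elementary abelian $2$-groups (including $\mathbb{Z}_2$) every factor is $(2)^{2-2}=1$, so $\kappa(G)=1$ and $\pi(\kappa(G))=\varnothing\neq\{2\}$, which means the corollary as stated should either assert only $\pi(\kappa(G))\subseteq\{p\}$ or exclude that case explicitly.
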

\begin{proof} The proof is straightforward, by Lemma \ref{B6} and Theorem \ref{MR1}. 
\end{proof}

Now, we are ready to represent our results due to some useful divisors of the $\kappa(G)$, for a finite group $G$. 

\begin{Lemma}\label{B2} Let $G$ be a group and $p\in \mu(G)$, for some prime number $p$ in $\pi(G)$, then $p^{p-2} \ | \ \kappa(G)$.
\end{Lemma}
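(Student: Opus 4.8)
The plan is to exploit the maximality of $p$ in the spectrum to show that every element of order $p$ lies in a connected component of $\mathcal{P}^*(G)$ equal to its own cyclic subgroup minus the identity, and then to apply the cut-vertex factorisation together with Cayley's formula $\kappa(K_p)=p^{p-2}$.

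First I would fix an element $x\in G$ of order $p$, which exists because $p\in\pi_e(G)$, and determine the connected component $\Omega$ of $x$ in $\mathcal{P}^*(G)$ by examining its neighbours. If $y\sim x$ with $y\neq 1$, then by the definition of the power graph either $y\in\langle x\rangle$ or $x\in\langle y\rangle$. In the first case $y$ is a non-identity power of $x$. In the second case $p=o(x)\mid o(y)$; but $p\in\mu(G)$ means that no proper multiple of $p$ belongs to $\pi_e(G)$, so $o(y)=p$ and hence $\langle y\rangle=\langle x\rangle$. Either way $y\in\langle x\rangle\setminus\{1\}$. The same reasoning applies verbatim to every element of $\langle x\rangle\setminus\{1\}$ (each has order $p$ and generates $\langle x\rangle$), so this set is closed under taking neighbours; being a clique, it is therefore exactly the component $\Omega$, a copy of $K_{p-1}$.

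Adding the identity back gives $\tilde{\Omega}=\Omega+1=\langle x\rangle$, whose power graph is the complete graph $K_p$ by Lemma \ref{CH1} (as $\langle x\rangle\cong\mathbb{Z}_p$), so $\kappa(\tilde{\Omega})=p^{p-2}$ by Cayley's formula \cite{Ca}. Since the identity is adjacent to every other vertex, $1$ is a cut vertex of $\mathcal{P}(G)$ whenever $\mathcal{P}^*(G)$ is disconnected, and $\Omega$ is then one of the components $\Gamma_i$ in Theorem \ref{MR1}. That theorem yields $\kappa(G)=\kappa(\tilde{\Gamma_1})\cdots\kappa(\tilde{\Gamma_c})$, in which $\kappa(\tilde{\Omega})=p^{p-2}$ appears as a factor, whence $p^{p-2}\mid\kappa(G)$.

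The only genuine subtlety is the degenerate case in which $\mathcal{P}^*(G)$ is connected, i.e. $\Omega$ is all of $\mathcal{P}^*(G)$; this forces $G=\langle x\rangle\cong\mathbb{Z}_p$, and then $\mathcal{P}(G)=K_p$ gives $\kappa(G)=p^{p-2}$ directly, so the divisibility still holds. I expect the point requiring the most care to be the neighbour analysis establishing $\Omega=\langle x\rangle\setminus\{1\}$: it is precisely here that the hypothesis $p\in\mu(G)$, rather than merely $p\in\pi_e(G)$, is needed, to rule out an element whose order is a multiple of $p$ and which could otherwise connect $x$ to the rest of the graph. Once that is in place, the conclusion follows immediately from the cut-vertex formula and Cayley's theorem.
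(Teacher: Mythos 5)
Your proposal is correct and follows essentially the same route as the paper: the maximality of $p$ in $\mu(G)$ forces $\langle x\rangle\setminus\{1\}$ to be a connected component of $\mathcal{P}^*(G)$, and then the cut-vertex factorisation (Theorem \ref{MR1}) together with Lemma \ref{CH1} and Cayley's formula gives the factor $p^{p-2}$ of $\kappa(G)$. In fact your write-up is slightly more careful than the paper's, since you verify closure under adjacency for \emph{all} elements of $\langle x\rangle\setminus\{1\}$ (not just $x$) and treat the degenerate case where $\mathcal{P}^*(G)$ is connected.
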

\begin{proof} Let $g \in G$ be an element of order $p$, for a prime number $p\in \mu(G)$. By Lemma \ref{CH1}, $\mathcal{P}(\langle g \rangle)$ is a complete graph. Hence, we only need to prove $\langle g \rangle$ is a connected component in $\mathcal{P}^*(G)$.
Assume that $g$ and $h$ are adjacent, for some $h \in G$. If $h \in \langle g \rangle$, then there is nothing to be proved. Let $g \in \langle h \rangle$ or equivalently $g= h^\alpha$, for some $\alpha$. But then $o(g) \ | \ o(h)$, which implies that $o(g) = o(h)$ because $o(g)\in \mu(G)$, and so $\langle g \rangle= \langle h \rangle$. Therefor, $\langle g \rangle$ is a connected component in $\mathcal{P}^*(G)$, and so by Theorem \ref{MR1}, 
$\kappa(\langle g \rangle)=p^{p-2} \ | \ \kappa(G)$, as required. 
\end{proof}

The following result is  not limited to only connected algebraic graphs, and holds for all connected simple graphs. 

\begin{Lemma}\label{DEG} Let $\Gamma$ be a simple graph and $\{v_1, v_2, \ldots, v_k\}\subseteq V(\Gamma)$. If the vertices $v_1, v_2, \ldots, v_k$ have full-degree, then $|V(\Gamma)|^k \ | \ \det((\mathbf{J}+\mathbf{Q})$.
\end{Lemma}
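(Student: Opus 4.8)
The plan is to exploit the rigid structure that a full-degree vertex forces on the corresponding row of the matrix $\mathbf{M} := \mathbf{J} + \mathbf{Q}$, and then to extract the factors of $n := |V(\Gamma)|$ by multilinearity of the determinant. First I would fix an ordering of the vertices and compute, for a single full-degree vertex $v_i$, the entire $i$-th row of $\mathbf{M}$. Since $v_i$ is adjacent to every other vertex, its degree equals $n-1$, so the diagonal entry of $\mathbf{\Delta}$ at position $i$ is $n-1$, while the $i$-th row of $\mathbf{A}$ carries a $1$ in every off-diagonal position and $0$ on the diagonal. Hence the $i$-th row of $\mathbf{Q}=\mathbf{\Delta}-\mathbf{A}$ has entry $n-1$ on the diagonal and $-1$ in each off-diagonal slot. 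Adding the all-ones matrix $\mathbf{J}$ turns this row into one whose diagonal entry is $(n-1)+1=n$ and whose every off-diagonal entry is $(-1)+1=0$; that is, the $i$-th row of $\mathbf{M}$ equals $n\,\mathbf{e}_i^{\top}$, where $\mathbf{e}_i$ denotes the $i$-th standard basis vector.

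With this computation in hand, the divisibility is almost immediate. Applying the observation to each of the $k$ full-degree vertices $v_1,\ldots,v_k$, I obtain $k$ rows of $\mathbf{M}$ that are the scalar multiples $n\,\mathbf{e}_{i_1}^{\top},\ldots,n\,\mathbf{e}_{i_k}^{\top}$. By linearity of the determinant in each row, I can pull out the common factor $n$ from every one of these $k$ rows, which yields
$$\det(\mathbf{M}) = n^{k}\,\det(\mathbf{M}'),$$
where $\mathbf{M}'$ is obtained from $\mathbf{M}$ by replacing each of those $k$ rows with the corresponding $\mathbf{e}_{i_j}^{\top}$. Since $\mathbf{M}$ has integer entries and the replacement rows are integral, $\mathbf{M}'$ is an integer matrix, so $\det(\mathbf{M}')\in\mathbb{Z}$. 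Therefore $n^{k}\mid\det(\mathbf{M})=\det(\mathbf{J}+\mathbf{Q})$, as claimed.

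The only point requiring care, and what I expect to be the sole genuine step, is the exact bookkeeping of the row entries after forming $\mathbf{J}+\mathbf{Q}$, in particular the cancellation $(-1)+1=0$ in the off-diagonal positions and the addition $(n-1)+1=n$ on the diagonal; everything else is a one-line appeal to multilinearity and to the integrality of the remaining determinant. I note that no connectivity or algebraic hypothesis on $\Gamma$ is used, which matches the remark preceding the statement that the result holds for all simple graphs. As a consistency check, when $k=2$ this recovers the factor $n^2$ already forced by Theorem \ref{TE1}, since $\det(\mathbf{J}+\mathbf{Q})=n^2\kappa(\Gamma)$ with $\kappa(\Gamma)$ an integer.
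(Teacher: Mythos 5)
Your proof is correct and takes essentially the same approach as the paper's: both hinge on the computation that the row of $\mathbf{J}+\mathbf{Q}$ at each full-degree vertex is $n\,\mathbf{e}_i^{\top}$ (diagonal entry $(n-1)+1=n$, off-diagonal entries $-1+1=0$). The paper extracts the factor $n^{k}$ by ordering the full-degree vertices first and factoring the resulting block-diagonal determinant as $\det(n\mathbf{I}_{k\times k})\cdot\det(\cdot)$, whereas you pull $n$ out of each row by multilinearity and cite integrality of the remaining determinant; this is only a cosmetic difference in bookkeeping.
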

\begin{proof} Let $|V(\Gamma)|=n$. Since $v_1, v_2, \ldots, v_k$ have full-degree, the adjacency matrix $\mathbf{A}(\Gamma)$ and the diagonal matrix $\mathbf{\Delta}(\Gamma)$ have  the following
 structures:

$$\mathbf{A}(\Gamma)
=\left(
\begin{array}{c | c} 
\mathbf{J}_{k\times k}-\mathbf{I}_{k\times k}
 & \mathbf{J}_{k\times (n-k)}\\
&\\
\hline
&\\
 \mathbf{J}_{(n-k)\times k}&\mathbf{A}(\Gamma\setminus \{v_1, \ldots v_k\})\\
 \end{array} 
\right),
\ \ \ \& \ \ \ \ 
\mathbf{\Delta}(\Gamma)
=\left(
\begin{array}{c | c} 
(n-1)\mathbf{I}_{k\times k}
 & \mathbf{O}_{k\times (n-k)}\\
&\\
\hline
&\\
 \mathbf{O}_{(n-k)\times k}&\Theta_{(n-k)\times (n-k)}\\
 \end{array} 
\right),
$$

\vspace{0.5cm}

\noindent where $\Theta_{(n-k)\times (n-k)}$ is the diagonal matrix whose diagonal entries are the degree of vertexes $V(\Gamma)\setminus \{v_1, \ldots v_k\}$ in $\Gamma$.
Therefore the  matrix $\mathbf{J}+\mathbf{Q}=\mathbf{J}+(\mathbf{\Delta}(\Gamma)-\mathbf{A}(\Gamma)) $ has the following structure:

\vspace{0.5cm}

$$\mathbf{J}+\mathbf{Q}
=\left(
\begin{array}{c | c} 
n\mathbf{I}_{k\times k}
 & \mathbf{O}_{k\times (n-k)}\\
&\\
\hline
&\\
 \mathbf{O}_{(n-k)\times k}&\mathbf{J}_{(n-k)\times(n-k)}-(\mathbf{A}(\Gamma\setminus \{v_1, \ldots v_k\})-\Theta_{(n-k)\times (n-k)})\\
 \end{array} 
\right).
$$

\vspace{0.5cm}

\noindent Now, by the structure  of  $\mathbf{J}+\mathbf{Q}$, we have  
$$\det(\mathbf{J}+\mathbf{Q})=\det(n\mathbf{I}_{k\times k})\cdot \det(\mathbf{J}_{(n-k)\times(n-k)}-(\mathbf{A}(\Gamma\setminus \{v_1, \ldots v_k\})-\Theta_{(n-k)\times (n-k)})),$$
and so $n^k\ | \ \det(\mathbf{J}+\mathbf{Q})$, as a required. 
\end{proof}

In Lemma \ref{B2}, we find a divisor for $\kappa(G)$, when a prime divisor of $G$ be in the $\mu(G)$. In the next theorem,  we extend the result for any number in the $\mu(G)$, and represent a divisor for $\det(\mathbf{J}+\mathbf{Q}(\mathcal{P}(G))$. 

\begin{Theorem}\label{mu3} Let $G$ be a finite group and $m\in \mu(G)$. Then $|G|\cdot m^{\phi(m)} \ | \ \det(\mathbf{J}+\mathbf{Q}(\mathcal{P}(G))$. 
\end{Theorem}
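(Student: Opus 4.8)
The plan is to extract two independent pieces of spectral information from the integer symmetric matrix $\mathbf{J}+\mathbf{Q}$ of $\mathcal{P}(G)$ and then convert them into a divisibility statement for $\det(\mathbf{J}+\mathbf{Q})$ through the characteristic polynomial. Write $n=|G|$ and fix $g\in G$ with $o(g)=m$, so that $H=\langle g\rangle\cong\mathbb{Z}_m$ has exactly $\phi(m)$ generators $h_1,\dots,h_{\phi(m)}$, each of order $m$. First I would pin down the local structure at these order-$m$ elements, exactly in the spirit of Lemma \ref{B2} but without collapsing to the prime case. Since $m\in\mu(G)$ is maximal under divisibility, if $h_i$ is adjacent to $x$ then either $x\in\langle h_i\rangle=H$, or $H\subseteq\langle x\rangle$; the latter forces $o(x)=m$ and $\langle x\rangle=H$, so in both cases $x\in H$. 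Hence each $h_i$ is adjacent precisely to $H\setminus\{h_i\}$, has degree $m-1$, and the $h_i$ are mutual twins. A direct computation then shows that each difference $e_{h_i}-e_{h_1}$ ($i=2,\dots,\phi(m)$) satisfies $\mathbf{Q}(e_{h_i}-e_{h_1})=m(e_{h_i}-e_{h_1})$ while $\mathbf{J}(e_{h_i}-e_{h_1})=0$; thus these $\phi(m)-1$ independent vectors are eigenvectors of $\mathbf{J}+\mathbf{Q}$ with eigenvalue $m$, so $m$ is an eigenvalue of multiplicity at least $\phi(m)-1$.

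Next I would exhibit the eigenvalue $n=|G|$ with multiplicity at least two. On one hand the all-ones vector $\mathbf{1}$ always satisfies $(\mathbf{J}+\mathbf{Q})\mathbf{1}=n\mathbf{1}$, since $\mathbf{Q}\mathbf{1}=0$ and $\mathbf{J}\mathbf{1}=n\mathbf{1}$. On the other hand the identity element has full degree, so (as in the proof of Lemma \ref{DEG}) its row and column of $\mathbf{J}+\mathbf{Q}$ are $n\,e_{1}$, where $e_{1}$ is the standard basis vector at the identity; thus $(\mathbf{J}+\mathbf{Q})e_{1}=n\,e_{1}$. As $\mathbf{1}$ and $e_{1}$ are linearly independent, $n$ is an eigenvalue of multiplicity at least $2$.

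Finally I would assemble the pieces. If $G$ is cyclic then $m=n$, and the $\phi(m)$ generators together with the identity give $\phi(m)+1$ full-degree vertices, so Lemma \ref{DEG} yields $n^{\phi(m)+1}=|G|\cdot m^{\phi(m)}\mid\det(\mathbf{J}+\mathbf{Q})$ directly. Otherwise $m<n$, so $m\neq n$ and the characteristic polynomial of $\mathbf{J}+\mathbf{Q}$, being monic with integer coefficients, factors in $\mathbb{Z}[x]$ as $(x-m)^{\phi(m)-1}(x-n)^{2}\,r(x)$ with $r\in\mathbb{Z}[x]$ (Gauss's lemma, using $m\neq n$ so the two linear factors are coprime). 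Evaluating at $0$ gives $m^{\phi(m)-1}\,|G|^{2}\mid\det(\mathbf{J}+\mathbf{Q})$. The decisive arithmetic observation is that $m\mid|G|$ by Lagrange's theorem, so $m^{\phi(m)}\cdot|G|$ divides $m^{\phi(m)-1}\cdot|G|^{2}$ (the quotient being $|G|/m\in\mathbb{Z}$); chaining the two divisibilities yields $|G|\cdot m^{\phi(m)}\mid\det(\mathbf{J}+\mathbf{Q})$, as required.

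I expect the main obstacle to be precisely the recovery of the last power of $m$. The twin eigenvectors only deliver $m^{\phi(m)-1}$, one factor short of the target, and a naive Schur-complement computation on the generator block $m\mathbf{I}_{\phi(m)}$ does not obviously return an integer, so it cannot by itself certify the full $m^{\phi(m)}$. The step that closes this gap is the combination of the double eigenvalue $|G|$ with the relation $m\mid|G|$, which lets one borrow a single factor of $m$ from the surplus $|G|^{2}$; the part I would write out most carefully is the Gauss-lemma factorization, since that is exactly what guarantees that multiplicities of the integer eigenvalues $m$ and $|G|$ genuinely pass to divisibility of $\det(\mathbf{J}+\mathbf{Q})$.
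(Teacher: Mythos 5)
Your proposal is correct, but it reaches the theorem by a genuinely different route than the paper. The paper's proof stays entirely inside determinant manipulations: it first peels off the factor $|G|$ using Lemma \ref{DEG} (the identity has full degree), then uses maximality of $m$ to show that the $\phi(m)$ generators of $\langle g\rangle$ index a diagonal block $m\mathbf{I}_{\phi(m)\times\phi(m)}$ in the matrix $\mathbf{J}+\mathbf{Q}$ restricted to $\mathcal{P}^*(G)$, and finally extracts $m^{\phi(m)}$ by explicit row and column operations. You use the same combinatorial input --- by maximality of $m$ under divisibility, each generator's closed neighbourhood is exactly $\langle g\rangle$ --- but you convert it into spectral data instead: the twin differences $e_{h_i}-e_{h_1}$ give the eigenvalue $m$ with multiplicity at least $\phi(m)-1$, the vectors $\mathbf{1}$ and $e_{1}$ give the eigenvalue $|G|$ with multiplicity at least $2$, and Gauss's lemma plus Lagrange's theorem ($m$ divides $|G|$) then yield the stated divisibility, with the cyclic case $m=|G|$ dispatched separately via Lemma \ref{DEG}. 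Your route buys robustness at exactly the point you flag at the end: in the paper's computation the last column operation subtracts $\frac{1}{m}C_1$, so the remaining cofactor $\det(\Theta-\Upsilon)$ is a priori only a rational number, and passing from $\det(\mathbf{J}+\mathbf{Q})=|G|\cdot m^{\phi(m)}\cdot\det(\Theta-\Upsilon)$ to the divisibility claim tacitly requires that this cofactor be an integer --- an integrality argument the paper does not supply. Your argument never claims the full power $m^{\phi(m)}$ spectrally; it certifies only $m^{\phi(m)-1}|G|^{2}$, which is an honest integer divisor by Gauss's lemma, and recovers the last factor of $m$ arithmetically from $m\mid|G|$. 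What the paper's method buys in exchange is uniformity and explicitness: no case split for cyclic $G$, and a concrete cofactor matrix rather than an abstract integer polynomial $r(x)$.
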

\begin{proof} Let $|G|=n$. By Lemma \ref{DEG}, since the vertex $1$ has full-degree, we have 
 $$\det(\mathbf{J}_{n\times n}+\mathbf{Q}(\mathcal{P}(G)))=n \cdot \det(\mathbf{J}_{(n-1)\times (n-1)}+(\mathbf{\Delta}-\mathbf{A}(\mathcal{P}^*(G)))), $$
where $\mathbf{\Delta}$ is the diagonal matrix whose diagonal entries are the degree of vertexes in  $\mathcal{P}^*(G)$ in $\mathcal{P}(G)$.

Suppose that $g\in G$ be an element of order $m$. By the definition of power graph, since 
$x\in G$ is adjacent to $g$ if and only if $\langle x\rangle\leq \langle g \rangle$, 
therefore $\mathbf{A}(\mathcal{P}^*(G))$ and
 $\mathbf{\Delta}$ have the following block-matrix structure:

\vspace{0.5cm}

$$
\mathbf{A}(\mathcal{P}^*(G))=\left(
\begin{array}{c | c c c}
(\mathbf{J}-\mathbf{I})_{\phi(m)\times \phi(m)} 
 & \mathbf{J}_{\phi(m)\times ((m-1)-\phi(m))}   \ \ \ \ \  \rvline&  \mathbf{O}_{\phi(m)\times( n-m)}  \\ 
&&& \\
\hline
&&& \\
   \mathbf{J}_{((m-1)-\phi(m))\times\phi(m)}  &  &  \\
&&& \\
\cline{1-1}
& \ \ \ \ \ \ \ \ \ \ \ \    \Omega_ {((n-1)-\phi(m))\times((n-1)-\phi(m))}&\\
  \mathbf{O}_{(n-m)\times \phi(m)}&&
 \end{array}
\right) 
,$$
\vspace{0.2cm}

\noindent where $ \Omega_ {((n-1)-\phi(m))\times((n-1)-\phi(m))}=\mathbf{A}(\mathcal{P}(G)\setminus \langle g\rangle)$, and

\vspace{0.5cm}

$$
\mathbf{\Delta}=\left( 
\begin{array}{c | c c c} 
(m-1)\mathbf{I}_{\phi(m)\times \phi(m)} 
 &  \mathbf{O}_{\phi(m)\times ((n-1)-\phi(m))}  \\ 
&&& \\
\hline
&&& \\
\mathbf{O}_{((n-1)-\phi(m))\times \phi(m)}& \ \ \ \ \ \ \ \ \ \ \ \    \Lambda_ {((n-1)-\phi(m))\times ((n-1)-\phi(m))}&\\
  \end{array}
\right) 
,$$

\vspace{0.5cm}

\noindent where $\Lambda_ {((n-1)-\phi(m))\times ((n-1)-\phi(m))}$ is the diagonal matrix whose diagonal entries are the degree of elements of $\mathcal{P}(G)\setminus \langle g\rangle$ in $\mathcal{P}(G)$. 

On the other hand, by considering  the Laplacian matrix $\mathbf{Q}=\mathbf{\Delta}-\mathbf{A}(\mathcal{P}^*(G)))$,
we must have the following block-matrix structure for $\mathbf{J}_{(n-1)\times (n-1)}+\mathbf{Q}$:

$$
\mathbf{J}_{(n-1)\times (n-1)}+\mathbf{Q}=\left(
\begin{array}{c | c c c} 
m \mathbf{I}_{\phi(m)\times \phi(m)} 
 & \mathbf{O}_{\phi(m)\times((m-1)-\phi(m))}   \ \ \ \ \  \rvline&  \mathbf{J}_{\phi(m)\times (n-m)}  \\ 
&&& \\
\hline
&&& \\
   \mathbf{O}_{((m-1)-\phi(m))\times\phi(m)}  &  &  \\
&&& \\
\cline{1-1}
& \ \ \ \ \ \ \ \ \ \ \ \    \Theta_ {((n-1)-\phi(m))\times ((n-1)-\phi(m))}&\\
  \mathbf{J}_{(n-m)\times \phi(m)}&&
 \end{array} 
\right) 
,$$
where 
$${\Theta}_{((n-1)-\phi(m))\times ((n-1)-\phi(m))}=\mathbf{J}_{{((n-1)-\phi(m))\times ((n-1)-\phi(m))}}+ \ \ \ \ \ \ \ \ \ \ \ \ \ \  \ \ \ \ \ \ \ \ \ \ \ \ \ \  \ \ \ \ \ \ \ \ \ \ \ \ \ \ $$
$$ \ \ \ \ \ \ \ \ \ \ \ \ \ \  \ \ \ \ \ \ \ \ \ \ \ \ \ \  \ \ \ \ \ \ \ \ \ \ \ \ \ \    \ \ \ \ \ \ \ \ \ \ \ \ \ \ (\Lambda_ {((n-1)-\phi(m))\times ((n-1)-\phi(m))}-\Omega_ {((n-1)-\phi(m))\times((n-1)-\phi(m))} ).$$

\vspace{0.3cm}

In the sequel, $Ri$ and $Cj$ respectively designate the row $i$ and the column $j$ of the matrix $\mathbf{J}_{(n-1)\times (n-1)}+\mathbf{Q}$.
We apply the following row and column operations in the $\det(\mathbf{J}_{(n-1)\times (n-1)}+\mathbf{Q})$.
We subtract row $R_1$ from row $R_i$, for $i=2, 3, \ldots, \phi(m)$ and subsequently we add
column $C_j$ to column $C_1$, for $j=2, 3, \ldots, \phi(m)$. It is not too difficult to see that, step by step, we have:

$$\det(\mathbf{J}_{(n-1)\times (n-1)}+\mathbf{Q})
=\det (\left(
\begin{array}{c | c c c} 
m \mathbf{I}_{\phi(m)\times \phi(m)} 
 & \mathbf{O}_{\phi(m)\times((m-1)-\phi(m)) }   \ \ \ \ \  \rvline&
\begin{array}{ccc}
 1 &\dots &1 \\
0&\dots &0  \\
\vdots &\vdots &\vdots \\
0 & \dots &0 
\end{array}  \\ 
&&& \\
\hline
&&& \\
   \mathbf{O}_{((m-1)-\phi(m))\times\phi(m)) }  &  &  \\
&&& \\
\cline{1-1}
& \ \ \ \ \ \ \ \ \ \ \ \    \Theta_ {((n-1)-\phi(m))\times ((n-1)-\phi(m))}&\\
 \begin{array}{cccc}
 \phi(m) &1&\dots &1 \\
\vdots& \vdots &\dots &\vdots   \\
\phi(m) &1 &\dots &1
\end{array}  \\ &&
 \end{array} 
\right)),
$$

\vspace{0.5cm}

\noindent again, we  subtract $\frac {1}{m} C_1$ from $C_j$, for $j=m,  \ldots, n-1$, and so 
\vspace{0.5cm}

$$\det(\mathbf{J}_{(n-1)\times (n-1)}+\mathbf{Q})
=\det (\left(
\begin{array}{c | c c c} 
m \mathbf{I}_{\phi(m)\times \phi(m)} 
 & \mathbf{O}_{((n-1)-\phi(m))\times((n-1)-\phi(m)) }   &\\
&&& \\
\hline
&&& \\
   \mathbf{O}_{((m-1)-\phi(m))\times\phi(m) }  &  &  \\
&&& \\
\cline{1-1}
& \ \ \ \ \ \ \ \ \ \ \ \    \Theta_ {((n-1)-\phi(m))\times ((n-1)-\phi(m))}-\Upsilon\\
 \begin{array}{cccc}
 \phi(m) &1&\dots &1 \\
\vdots& \vdots &\dots &\vdots   \\
\phi(m) &1 &\dots &1
\end{array}  \\ &&
 \end{array} 
\right)),
$$
where $\Upsilon$ has the following structure

$$
\Upsilon= \left(
\begin{array}{c|c } 
&1 \ \ \ \ \  \ldots \ \ \ \ \  1\\
& \\
\cline{2-2}
\mathbf{O}_{((n-1)-\phi(m))\times ((m-1)-\phi(m)) } &  \\
&\\
& \mathbf{O}_{((m-1)-\phi(m))\times(n-m) }    \\
& \\
\cline{2-2}
&\\
 &\frac{\phi(m)}{m}  \mathbf{J}_{(n-m)\times((n-m) } \\
 \end{array} 
\right),
$$

\vspace{0.5cm}

\noindent and so $$\det(\mathbf{J}_{(n-1)\times (n-1)}+\mathbf{Q})= m^{\phi(m)} \cdot \det( \Theta_ {((n-1)-\phi(m))\times ((n-1)-\phi(m))}-\Upsilon).$$

\vspace{0.3cm}

 On the other hand, by above mention discussion, 
$\det(\mathbf{J}_{n\times n}+\mathbf{Q}(\mathcal{P}(G)))=n \cdot \det(\mathbf{J}_{(n-1)\times (n-1)}+\mathbf{Q}$. 
Therefore $$n\cdot m^{\phi(m)} \  | \ \det(\mathbf{J}_{n\times n}+\mathbf{Q}(\mathcal{P}(G))),$$ as we claimed. 
\end{proof}

\begin{Corollary} \label{B7} Let $G$ be a group and $g\in G$. If the degree of $g$ is $k$ in $\mathcal{P}(G)$, then $$|G|\cdot (k+1)^{\phi(|g|)} \ | \ \det(\mathbf{J}_{n\times n}+\mathbf{Q}(\mathcal{P}(G))).$$
\end{Corollary}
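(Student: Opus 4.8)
The plan is to obtain Corollary \ref{B7} as the natural generalization of Theorem \ref{mu3}, by noticing that the proof of that theorem never really uses the maximality of the order of $g$: it uses only that the generators of $\langle g\rangle$ form a clique with a common neighborhood. So first I would record the structural fact. Let $g_1,\dots,g_{\phi(|g|)}$ be the generators of $\langle g\rangle$. Since $\langle g_i\rangle=\langle g\rangle$ for every $i$, the $g_i$ are pairwise adjacent in $\mathcal{P}(G)$, and for any vertex $x\notin\{g_1,\dots,g_{\phi(|g|)}\}$ the adjacency condition $\langle x\rangle\subseteq\langle g_i\rangle$ or $\langle g_i\rangle\subseteq\langle x\rangle$ depends on $\langle g_i\rangle=\langle g\rangle$ alone. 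Hence the generators have identical neighborhoods, and in particular all of them have degree $k$.

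Next I would peel off the factor $|G|$. Since the identity is a full-degree vertex of $\mathcal{P}(G)$, applying Lemma \ref{DEG} to the single vertex $1$ gives the factorization $\det(\mathbf{J}_{n\times n}+\mathbf{Q}(\mathcal{P}(G)))=n\cdot\det(\mathbf{J}_{(n-1)\times(n-1)}+\mathbf{Q})$, where $n=|G|$ and the $(n-1)\times(n-1)$ matrix is formed on $\mathcal{P}^*(G)$ with the vertex degrees inherited from $\mathcal{P}(G)$.

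Then I would order the remaining $n-1$ vertices as the generators of $\langle g\rangle$, followed by their common neighbors $S$, followed by the non-neighbors $T$, and read off the block shape of $\mathbf{J}_{(n-1)\times(n-1)}+\mathbf{Q}$. By the structural fact the generator block equals $(k+1)\mathbf{I}_{\phi(|g|)}$ (each diagonal entry is $1+k$ and each generator--generator off-diagonal entry is $1+0-1=0$), the generator--$S$ block is $\mathbf{O}$ (a common neighbor contributes $1-1=0$), and the generator--$T$ block is $\mathbf{J}$ (a non-neighbor contributes $1-0=1$). This is precisely the block pattern in the proof of Theorem \ref{mu3}, with $m$ replaced by $k+1$ and with the non-generator elements of $\langle g\rangle$ replaced by the set $S$. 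Running the identical row and column operations used there --- subtract the first generator row from the others, add the generator columns back into the first, and clear the coupling with the $T$-columns --- should extract the factor $(k+1)^{\phi(|g|)}$, so that together with the factor $n$ from the previous step one gets $|G|\cdot(k+1)^{\phi(|g|)}\mid\det(\mathbf{J}_{n\times n}+\mathbf{Q}(\mathcal{P}(G)))$.

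The step I expect to be the main obstacle is the determinant extraction itself. The twin vectors $e_{g_1}-e_{g_i}$ are eigenvectors of $\mathbf{J}+\mathbf{Q}$ with eigenvalue $k+1$ (since $\mathbf{J}(e_{g_1}-e_{g_i})=0$ and the Laplacian part returns $(k+1)(e_{g_1}-e_{g_i})$), which immediately accounts for a factor $(k+1)^{\phi(|g|)-1}$. Squeezing out the final factor of $k+1$ is the delicate part, because the column operations that decouple the generators from $T$ introduce entries of the form $\phi(|g|)/(k+1)$ (exactly as the $\phi(m)/m$ terms appear in Theorem \ref{mu3}), and one must verify that the surviving cofactor is still an integer. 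Checking this integrality is where I would concentrate, as it is precisely the point on which the full power $(k+1)^{\phi(|g|)}$, rather than only $(k+1)^{\phi(|g|)-1}$, depends.
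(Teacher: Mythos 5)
Your proposal is a faithful expansion of what the paper itself does: its entire proof of Corollary \ref{B7} is one sentence invoking ``the exact same way'' as the proof of Theorem \ref{mu3}, and your block decomposition, row/column operations, and twin-vector observation reproduce that argument. However, the step you singled out as the main obstacle --- verifying that the surviving cofactor is an integer, i.e.\ extracting the last factor of $k+1$ --- is not a technical point that can be checked; it genuinely fails, and the corollary as stated is false. Take $G=\mathbb{Z}_6$ and let $g$ be an element of order $3$, so $\phi(|g|)=2$. In $\mathcal{P}(\mathbb{Z}_6)$ the identity and the two elements of order $6$ are universal vertices, the two elements of order $3$ are adjacent to each other, and the involution is adjacent only to the three universal vertices; hence $k=\deg(g)=4$ and $k+1=5$. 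This graph is the join $K_3\vee(K_2\oplus K_1)$, whose Laplacian spectrum is $\{0,3,5,6,6,6\}$, so $\kappa(\mathbb{Z}_6)=\tfrac{1}{6}(3\cdot 5\cdot 6^3)=540$ and, by Theorem \ref{TE1}, $\det(\mathbf{J}+\mathbf{Q})=6^2\cdot 540=19440=2^4\cdot 3^5\cdot 5$. The corollary would require $|G|\cdot(k+1)^{\phi(|g|)}=6\cdot 5^2=150$ to divide $19440$, but $5^2\nmid 19440$. Note that your twin-eigenvector count is sharp here: $e_{g}-e_{g^2}$ produces the eigenvalue $5$ exactly once, so the factor $(k+1)^{\phi(|g|)-1}$ is all there is.

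The diagnosis is exactly at the spot you identified: running the paper's operations on this example gives $\det(\mathbf{J}_{5\times 5}+\mathbf{Q})=5^2\cdot\det(\Theta-\Upsilon)$ with $\det(\Theta-\Upsilon)=648/5$, a non-integer, so the factorization carries no divisibility information. The hypothesis $m\in\mu(G)$ in Theorem \ref{mu3} is therefore not a removable convenience. Maximality of the order forces the closed neighbourhood of the generating twins to be exactly $\langle g\rangle$; for a general $g$ the common neighbourhood contains vertices lying strictly above $\langle g\rangle$ (here the two elements of order $6$, which are also adjacent to the non-neighbour of $g$), and this is what destroys the divisibility. Any correct statement must either retain the maximality hypothesis or weaken the exponent to $\phi(|g|)-1$, which is what the twin argument honestly yields for every $g$. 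Two further remarks: even in the paper's proof of Theorem \ref{mu3} the integrality of $\det(\Theta-\Upsilon)$ is asserted rather than proved, so that point deserves scrutiny there as well; and fortunately the paper's main result (Theorem \ref{A6}) only uses Theorem \ref{mu3}, with $56\in\mu(S_4(7))$, not Corollary \ref{B7}, so the failure of the corollary does not propagate to it.
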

\begin{proof} By the exact same way in the proof of Theorem \ref{B6}, the result is straightforward. 
\end{proof}


At the end, as an application of our main results, we are going to prove Theorem \ref{A6}. As a matter of fact, we verify that  $h_{\mathcal {S}} (A_6\cong L_2(9)) = 1$,
in the class $\mathcal{S}$ of all
finite simple groups.

\vspace{0.5cm}

\begin{proof} For the proving of Theorem \ref{A6}, assume that  $G\in \mathcal{S}$, with  $\kappa(G)=\kappa(A_6) = 2^{180}\cdot3^{40}\cdot5^{108}$ (see Theorem \ref{mnn}).
First of all,  $G$ is a non-abelian simple group. Otherwise, $\kappa(G)=\kappa(\mathbb{Z}_p)=p^{p-2}$, for some prim number $p$, which is a contradiction. 

In the next, we claim that  
$\pi(G)\subseteq \{2, 3, 5, 7, 11\}$.
By Lemma \ref{MR1}, we have $c_p\geqslant p+1$, where $p\in \pi(G)$
and $c_p$ is the number of cyclic subgroups of order $p$ in $G$. Therefore, by Lemma \ref{MR}, 

$$13^{11\cdot14} \gneqq2^{180}\cdot3^{40}\cdot5^{108}=\kappa(G) > \kappa({\mathbb Z}_p)^{c_p}\geqslant \kappa ({\mathbb Z}_p)^{p+1}=p^{(p-2)(p+1)},$$
which leads us to the conclusion. 

Finally we are going to show that $G\cong A_6\cong L_2(9)$.
If $7$ or $11$ be an element in $\mu(G)$, then by Lemma \ref{B2}, $7^5$, or $11^9$ divide $\kappa(G)$, which is a contradiction. 
Now, by results collected in \cite{zav}, 
$G$ is isomorphic to one of the groups $A_5\cong L_2(4)\cong L_2(5)$, $A_6\cong L_2(9)$, $S_4(7)$. 
By Theorem \ref{AA5}, $G\ncong A_5$.
 If $G\cong S_4(7)$, then by Theorem \ref{mu3}, since $56\in \mu(G)$, 
$$|G|\cdot (56)^\phi(56)= |S_4(7)|\cdot 2^{72}\cdot 7^{24}\ | \ \det(\mathbf{J}+\mathbf{Q}), $$
which concludes that (by Theorem \ref{TE1}) $7^{20}\ | \ \kappa(G)$, again we have a contradiction. 
Therefore $G\cong A_6$, and the proof has been completed. 
\end {proof}


	
\end{document}